\theoremstyle{plain}
\newtheorem{definition}{Definition}
\newtheorem{proposition}[definition]{Proposition}
\newtheorem{theorem}[definition]{Theorem}
\newtheorem{lemma}[definition]{Lemma}
\newtheorem{conjecture}[definition]{Conjecture}
\newtheorem{claim}[definition]{Claim}
\newcommand{\chif}{\chi'_f}
\newcommand{\sm}{\setminus}
\newcommand{\comment}[1]{}
\newcommand{\Z}{\mathbb Z}
\renewcommand{\subset}{\subseteq}
\newcommand{\pos}[1]{[#1]^+}
\newcommand{\Tsub}[2]{T_{(#1,#2)}}
\newcommand{\newl}{d}
\tikzstyle{hvertex}=[thick,circle,inner sep=0.cm, minimum size=2.5mm, fill=white, draw=black]
\tikzstyle{hedge}=[very thick]
\tikzstyle{medge}=[ultra thick,black,densely dotted, preaction={draw,line width=2pt,white}]
\colorlet{hellgrau}{black!30!white}
\colorlet{dunkelgrau}{black!50!white}
\newcommand{\mycolour}{Peach}
\renewcommand{\c}[2]{\textcolor{blue}{#2}\textcolor{red}{#1}}
\title{Chromatic index, treewidth and maximum degree}
\author{Henning Bruhn, Laura Gellert and Richard Lang\thanks{The research leading to these results was partially supported by EPSRC, grant no. EP/P002420/1.}}
\date{}
\begin{document}
\maketitle

\begin{abstract}
 We conjecture that any graph $G$ with treewidth~$k$ and maximum degree $\Delta(G)\geq k + \sqrt{k}$ satisfies $\chi'(G)=\Delta(G)$. In support of the conjecture we prove its fractional version.
We also show that any graph $G$ with treewidth~$k\geq 4$ and maximum degree $2k-1$ satisfies $\chi'(G)=\Delta(G)$, extending an old result of Vizing.
\end{abstract}

\section{Introduction}

The least number $\chi'(G)$ of colours necessary to properly  colour the edges of a (simple) graph $G$ is either the maximum degree $\Delta(G)$ or $\Delta(G)+1$.
But 
to decide whether $\Delta(G)$ or $\Delta(G)+1$ colours suffice is a difficult
algorithmic problem~\cite{Hol81}. 

Often, graphs with a relatively simple structure can be edge-coloured with only $\Delta(G)$
colours. This is the case for bipartite graphs (K\"onig's theorem) and for cubic Hamiltonian graphs.
Arguably, one measure of simplicity is \emph{treewidth}, how closely a 
graph resembles a tree. (See next section for a definition.)

Vizing~\cite{Vizing_russ} (see also Zhou et al.~\cite{Zhou_Na_Ni96}) observed a 
consequence of his adjacency lemma: any graph with treewidth~$k$ and maximum degree at least~$2k$ has chromatic index $\chi'(G)=\Delta(G)$.\footnote{{More generally, the same holds for $k$-degenerate graphs (see Section~\ref{Sec:degenerate}).}}
Is this tight? No, it turns out.
Using two recent adjacency lemmas we can decrease the required maximum degree: 
\begin{proposition}\label{prop:edge-colouring} For any graph $G$ of treewidth $k\geq 4$ and maximum degree $\Delta(G)\geq 2k-1$ it holds that $\chi'(G)=\Delta(G)$.
\end{proposition}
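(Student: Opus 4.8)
The plan is a standard reduction to edge-chromatic-critical graphs, followed by a local analysis driven by adjacency lemmas. Suppose the proposition fails; then some graph $G$ with $\tw(G)\le k$ and $\Delta(G)\ge 2k-1$ has $\chi'(G)=\Delta(G)+1$. Deleting edges one at a time while the chromatic index stays $\Delta(G)+1$ produces an edge-critical subgraph $H$ with $\chi'(H)=\Delta(G)+1$; then $\Delta(G)+1=\chi'(H)\le\Delta(H)+1\le\Delta(G)+1$ forces $\Delta(H)=\Delta(G)=:\Delta$, so $H$ is $\Delta$-critical, and $\tw(H)\le\tw(G)\le k$ since treewidth does not grow under taking subgraphs. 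Thus it suffices to prove that for $k\ge 4$ there is no $\Delta$-critical graph $H$ with $\tw(H)\le k$ and $\Delta\ge 2k-1$.

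Next I would locate a low-degree vertex and determine its local structure. A graph of treewidth at most $k$ is $k$-degenerate, so $H$ has a vertex $v$ with $d_H(v)\le k$. Vizing's Adjacency Lemma gives $v$ at least $\Delta-d_H(v)+1$ neighbours of degree $\Delta$; since $\Delta\ge 2k-1\ge 2d_H(v)-1$ this quantity is at least $d_H(v)$, so \emph{every} neighbour of $v$ has degree $\Delta$. As $v$ has only $d_H(v)$ neighbours, equality is forced throughout: $d_H(v)=k$ and $\Delta=2k-1$ (if instead $d_H(v)\le k-1$ or $\Delta\ge 2k$ one gets $\Delta-d_H(v)+1>d_H(v)$, an immediate contradiction, which is exactly Vizing's original $2k$ bound). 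Hence \emph{every} vertex of $H$ of degree at most $k$ in fact has degree exactly $k$ and all of its neighbours have degree $2k-1$. Now take a tree decomposition of $H$ of width $\le k$ with the fewest bags; it is reduced (no bag contained in a neighbouring bag), and since $H$ has a vertex of degree $2k-1$ it has more than $k+1$ vertices, so the decomposition tree has a leaf bag $B$ with neighbour $B'$. The private set $S:=B\setminus B'$ is nonempty and each of its vertices has all its neighbours in $B$, hence degree at most $|B|-1\le k$, hence degree exactly $k$. If $|S|\ge 2$, pick $u\in S$: either $u$ has a neighbour in $S$, forcing $d_H(u)=2k-1$, or it has none, forcing $d_H(u)\le|B|-2\le k-1$; both are impossible. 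So $S=\{v\}$ with $|B|=k+1$, $N(v)=B\setminus\{v\}=:C$, $|C|=k$, every vertex of $C$ has degree $2k-1$, and $C=B\cap B'$ is a separator of $H$ with $B'=C\cup\{w\}$ for a single extra vertex $w\notin B$.

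The endgame is to apply the two sharper adjacency lemmas to the edges between $v$ and $C$ and to the neighbourhoods of the vertices of $C$. Since $H-v\subseteq H-e$ for an edge $e$ incident with $v$ and $H$ is edge-critical, $\chi'(H-v)\le\Delta$; fix a $\Delta$-edge-colouring $\phi$ of $H-v$. Each $u\in C$ has degree $2k-2=\Delta-1$ in $H-v$, so misses exactly one colour $c_u$ under $\phi$, and colouring each edge $vu$ with $c_u$ extends $\phi$ to a $\Delta$-edge-colouring of $H$ — contradicting $\chi'(H)=\Delta+1$ — \emph{unless} the colours $(c_u)_{u\in C}$ fail to be pairwise distinct. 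So for every $\phi$ two of them coincide; Vizing's Adjacency Lemma is precisely the record of this obstruction and yields only the $2k$ bound. The role of the two recent adjacency lemmas is to say much more in the tight regime $\Delta=2k-1$, $d_H(v)=k$: they should bound how many neighbours of degree $<\Delta$ each vertex of $C$ (and of $N(C)$) can have, and force further adjacencies or degree-$\Delta$ vertices among $C\cup\{w\}$. Confined by the separator $C$ to the bag $B'$, this extra structure should then let one either recolour $\phi$ so that the forced colours $c_u$ become distinct (extending to $H$, a contradiction) or reach a contradiction by double counting the edges incident with $C$; the analogous structure at a second leaf bag is available should a single one not suffice.

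The step I expect to be the real work is this last one. Vizing's Adjacency Lemma is exactly sharp enough to eliminate $\Delta\ge 2k$ and nothing more, so the entire content of the improvement is to defeat the single configuration ``$\Delta=2k-1$, a degree-$k$ vertex whose $k$ neighbours all have degree $2k-1$'', and this can only come from the finer adjacency lemmas; I would expect the hypothesis $k\ge 4$ to enter precisely here, so that the degrees occurring in those lemmas are large enough for them to apply.
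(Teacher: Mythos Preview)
Your reduction to a $\Delta$-critical graph $H$ of treewidth at most $k$ with $\Delta=2k-1$ is correct, and the plan of localising in the tree decomposition and finishing with the sharper adjacency lemmas is the right one. But the central step of your local analysis is a misapplication of Vizing's Adjacency Lemma. You write that VAL gives a vertex $v$ at least $\Delta-d_H(v)+1$ neighbours of degree $\Delta$; this is not what the lemma says. For an edge $uv$ it asserts that $v$ has at least $\Delta-d_H(u)+1$ such neighbours: the bound depends on the degree of the \emph{other} endpoint. To force all neighbours of a degree-$k$ vertex $v$ to have degree $\Delta$ you would need $v$ to already possess a neighbour of degree at most $k$, which you do not know; criticality alone only yields $d_H(u)\ge\Delta+2-d_H(v)\ge k+1$ for each neighbour $u$ of $v$. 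With this gone, your leaf-bag argument collapses: the private vertices of a leaf bag have degree at most $k$, but their neighbours in $B$ may have any degree in $\{k+1,\ldots,\Delta\}$, so you can neither rule out $|S|\ge 2$ nor conclude that $C$ consists of degree-$\Delta$ vertices.

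The paper evades this by reversing the roles. Instead of a low-degree vertex in a leaf bag, it singles out a \emph{high}-degree vertex $v^*$ (of degree at least $k+2$) whose subtree $T(v^*)$ in a rooted smooth decomposition is as deep as possible, and lets $B_q$ be the top bag of $T(v^*)$. The extremal choice forces every vertex of $X\setminus B_q$ (the part of the graph hanging below $B_q$) to have degree at most $k+1$; a nested ``deepest'' choice together with Zhang's second-neighbourhood lemma then pushes these degrees down to at most $k$ (this is precisely where $k\ge 4$ is used, to guarantee $v^*$ at least three low-degree neighbours). Now $v^*$, having degree at least $k+2$, has neighbours $u$ outside $B_q$ of degree at most $k$, and VAL applies in the correct direction: $v^*$ acquires at least $\Delta-\deg(u)+1\ge k$ neighbours of degree $\Delta$, all trapped in $B_q$, so $v^*$ is complete to $B_q$ and every vertex of $B_q$ has degree $\Delta$. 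The contradiction is completed with the Sanders--Zhao lemma, applied to $v^*$ and two of its degree-$k$ neighbours, which share $k-1>k-3$ common neighbours in $B_q$. Your closing intuition---that the improvement must come from the finer lemmas and that $k\ge 4$ enters there---is right; what is missing is the correct vertex to anchor the argument on, and your endgame remains only a sketch in any case.
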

This immediately suggests the question: 
how much further can the maximum degree be lowered? 
We conjecture:

\begin{conjecture} \label{conjecture}
Any graph of treewidth $k$ and maximum degree $\Delta\geq k + \sqrt{k}$ has chromatic index $\Delta$.
\end{conjecture}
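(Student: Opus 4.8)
The plan is to attack the conjecture in two stages: first establish the \emph{fractional} version $\chi'_f(G)=\Delta(G)$ (which already gives strong evidence and is, I expect, the realistically attainable part), and then try to lift it to the integral statement by a critical-graph analysis. For the fractional part, use the known formula $\chi'_f(G)=\max\bigl\{\Delta(G),\ \max\{2|E(H)|/(|V(H)|-1):H\subseteq G,\ |V(H)|\text{ odd},\ |V(H)|\ge 3\}\bigr\}$ (a consequence of Edmonds' matching polytope theorem). Thus $\chi'_f(G)=\Delta(G)$ follows as soon as every subgraph $H\subseteq G$ on an odd number $n\ge 3$ of vertices satisfies $|E(H)|\le\tfrac12(n-1)\Delta(G)$, and the whole question reduces to bounding the number of edges of an $n$-vertex graph of treewidth at most $k$ and maximum degree at most $\Delta:=\Delta(G)\ge k+\sqrt k$.

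For this edge count I would balance two estimates. Since $H$ has treewidth at most $k$ it is $k$-degenerate, so $|E(H)|\le kn-\binom{k+1}{2}$ when $n\ge k+1$ (and $|E(H)|\le\binom n2$ when $n\le k+1$); a short computation shows this alone gives $|E(H)|\le\tfrac12(n-1)\Delta$ for all $n$ up to roughly $k+\sqrt k+1$, with equality precisely at the boundary $n=k+\sqrt k+1$, $\Delta=k+\sqrt k$. For larger $n$ one has $n>\Delta$, so every vertex degree is at most $\Delta$ and the trivial bound $|E(H)|\le\tfrac12 n\Delta$ is off from the target only by $\tfrac12\Delta$; this slack should be recovered by a deficiency count. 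Peeling a simplicial-type vertex from a width-$k$ tree decomposition produces a vertex of degree at most $k$, hence of deficiency $\Delta-d_H(v)\ge\Delta-k\ge\sqrt k$, and iterating this at the leaf bags of the decomposition should force the total deficiency $\sum_v(\Delta-d_H(v))$ to reach $\Delta$ exactly when $\Delta\ge k+\sqrt k$ --- which is where the threshold of the conjecture comes from. The delicate point is making the two regimes meet cleanly; in particular a bare degeneracy ordering only yields a constant $\sqrt2$ in front of $\sqrt k$, so one seems to need the $k$-tree structure (two far-apart pockets of low-degree vertices, not just one) to squeeze the constant down to $1$.

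For the integral conjecture itself, pass to a minimal counterexample: an edge-chromatic-critical graph $G$ with $\chi'(G)=\Delta+1$, treewidth $k$ and $\Delta\ge k+\sqrt k$. Fix a width-$k$ tree decomposition; a leaf bag contains a vertex $v$ with $d(v)\le k$, and Vizing's adjacency lemma together with the stronger fan- and Kierstead-type adjacency lemmas underlying Proposition~\ref{prop:edge-colouring} force the closed neighbourhood of $v$ to contain many vertices of degree $\Delta$ and to be otherwise rigid; combining this local rigidity with the global edge count from the fractional argument should, in principle, contradict the density bound. I expect the passage from the fractional to the integral bound to be the main obstacle, and the reason the statement is only a conjecture: at the threshold $k+\sqrt k$ the adjacency lemmas no longer exclude the neighbourhood configurations they permit, and the Goldberg--Seymour theorem only yields $\chi'(G)\le\Delta+1$ from $\chi'_f(G)=\Delta$, so a genuinely new ingredient is needed --- morally a bounded-treewidth refinement of Goldberg--Seymour, or a way to propagate leaf-bag rigidity through the whole tree decomposition. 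Establishing one of these is, in my view, where the real difficulty lies.
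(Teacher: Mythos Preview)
This statement is a \emph{conjecture} in the paper, not a theorem; the paper does not prove it either. Your proposal is accordingly not a proof but a realistic assessment of what can and cannot be done, and on that level it is accurate.

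Your fractional stage matches exactly what the paper does prove (Theorem~\ref{thm:fractional-tw-maxdeg}): reduce via Edmonds' matching polytope theorem to showing that no subgraph is overfull, and obtain this from an edge bound. The paper's edge bound is Proposition~\ref{prop:edge_bound}, namely $2|E(G)|\le\Delta|V(G)|-(\Delta-k)(\Delta-k+1)$, which in one stroke covers all $n$; it is obtained not by a two-regime split but by two purely combinatorial tree lemmas (Lemmas~\ref{lem:baumlemma-1} and~\ref{lem:baumlemma-2}) controlling the sum $\sum_{(s,t)}[d-|T_{(s,t)}|]^+$ over directed edges of the decomposition tree. Your intuition that the tree structure buys a second ``pocket'' of low-degree vertices is exactly what Lemma~\ref{lem:baumlemma-1} encodes, and your observation that bare $k$-degeneracy only gives a constant $\sqrt 2$ is the content of the paper's Section~\ref{Sec:degenerate}. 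So the fractional part is correct; the paper's tree-lemma route is cleaner than a two-regime splice and sidesteps the meeting-point delicacy you flag.

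For the integral conjecture your diagnosis coincides with the paper's implicit one: the adjacency-lemma machinery (Vizing, Zhang, Sanders--Zhao) used in Proposition~\ref{prop:edge-colouring} pushes the bound only down to $\Delta\ge 2k-1$, not to $k+\sqrt k$, and nothing in the paper bridges that gap. Your closing paragraph correctly names this as the real obstacle; it is not a flaw in your plan but precisely the reason the statement is labelled a conjecture.
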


The bound is close to best possible: in Section~\ref{Sec:discussion} we construct, 
for infinitely many $k$, graphs with treewidth~$k$, maximum degree~$\Delta=k+\lfloor\sqrt k\rfloor< k+\sqrt k$, and chromatic index $\Delta+1$. For other values~$k$ the conjecture (if true) might be off by~$1$ from 
the best bound on $\Delta$. This is, for instance, the case for $k=2$, where the conjecture 
is known to hold. Indeed, Juvan et al.~\cite{JMT99}
show that series-parallel graphs with maximum degree $\Delta\geq 3$ are even $\Delta$-edge-choosable.

In support of the conjecture we prove  its fractional version:
\begin{theorem}\label{thm:fractional-tw-maxdeg}
Any simple graph of treewidth $k$ and maximum degree $\Delta\geq k + \sqrt{k}$ has fractional chromatic index $\Delta$.
\end{theorem}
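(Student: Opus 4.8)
The plan is to use Edmonds' description of the matching polytope, which gives the well‑known identity
\[
\chif(G)=\max\Bigl\{\Delta(G),\ \max_{U\subseteq V(G),\,|U|\ge 2}\tfrac{|E(G[U])|}{\lfloor |U|/2\rfloor}\Bigr\}.
\]
As $\chif(G)\ge\Delta(G)$ always holds, it suffices to show $|E(G[U])|\le\lfloor |U|/2\rfloor\,\Delta$ for every $U$. For $|U|$ even this is immediate from $2|E(G[U])|=\sum_{v\in U}\deg_{G[U]}(v)\le|U|\Delta$, so only odd $U$ matter, and there the claim amounts to $\sum_{v\in U}\bigl(\Delta-\deg_{G[U]}(v)\bigr)\ge\Delta$.

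I would isolate this as a statement about arbitrary graphs of bounded treewidth and prove, by induction on the number of vertices: \emph{if $H$ has treewidth at most $k$, maximum degree at most $\Delta$, and $\Delta\ge k+\sqrt k$, then $2|E(H)|\le(|V(H)|-c)\Delta$, where $c$ is the number of components of $H$}. Since edge counts, treewidth and maximum degree all split over components, it is enough to treat connected $H$, for which this reads $2|E(H)|\le(n-1)\Delta$; applied to $H=G[U]$ and summed over components this is exactly what we need (for odd $|U|$ it is tight, for even $|U|$ stronger than required). For the base of the induction, the crude bounds $|E(H)|\le\binom n2$ and $|E(H)|\le kn-\binom{k+1}2$ (the latter because a graph of treewidth $\le k$ on $n\ge k+1$ vertices embeds in a $k$‑tree) give $2|E(H)|\le(n-1)\Delta$ for all $n\le k+1$ (using $k+1\le\Delta$) and, after a short calculation, whenever $n-1\le\frac{k(k-1)}{2k-\Delta}$; substituting $\Delta=k+\sqrt k$ the right‑hand side equals exactly $k+\sqrt k$, so the bound holds for $n\le k+\sqrt k+1$. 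This — together with the fact that the same calculation covers all $n$ once $\Delta\ge 2k$ — is the only point where the precise threshold $k+\sqrt k$ is used.

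The inductive step removes a suitable induced subgraph. If $S\subsetneq V(H)$ is nonempty, write $e(S)$ for the number of edges of $H$ with exactly one end in $S$; using $2|E(H[S])|\le|S|(|S|-1)$ and the induction hypothesis on $H-S$ gives $2|E(H)|\le|S|(|S|-1)+(n-|S|-1)\Delta+2e(S)$, which is at most $(n-1)\Delta$ provided $2e(S)\le|S|(\Delta-|S|+1)$. A cut vertex $v$ is handled with $S=\{v\}$: then $H-\{v\}$ has at least two components, so $2|E(H-\{v\})|\le(n-3)\Delta$ and $2|E(H)|\le(n-3)\Delta+2\deg_H(v)\le(n-1)\Delta$; hence we may assume $H$ is $2$‑connected. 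Now take $S=A$, the set of private vertices of a leaf bag $B_t$ of a reduced tree decomposition of width $\le k$: then $1\le|A|\le k$, every vertex of $A$ has all its neighbours inside $B_t$, so $e(A)\le|A|\,(k+1-|A|)$, and the criterion $2e(A)\le|A|(\Delta-|A|+1)$ holds precisely when $|A|\ge 2k+1-\Delta$. Since $2k+1-\Delta\le k+1-\sqrt k$ by hypothesis, the induction closes as soon as some leaf bag has at least $k+1-\sqrt k$ private vertices.

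The main obstacle is the opposite case, in which every reduced tree decomposition of $H$ has all its leaf bags with fewer than $k+1-\sqrt k$ private vertices. Here one must exploit that treewidth $k$ together with maximum degree well below $2k$ forces $H$ to be far from a $k$‑tree: a width‑$k$ decomposition cannot pack its size‑$(k+1)$ bags tightly without creating vertices of degree close to $2k$, so under our degree restriction $H$ must be appreciably sparser and, correspondingly, contain many vertices of degree well below $\Delta$ (equivalently, a decomposition with many branch nodes). Turning this trade‑off into either a set $S$ meeting the criterion above, or directly into the slack inequality $\sum_{v}(\Delta-\deg_H(v))\ge\Delta$, is the heart of the argument and the step that must be tuned to land exactly at $\Delta=k+\sqrt k$; I expect the delicate part to be a careful global count over the tree decomposition that weighs the number of branch nodes (hence of low‑degree vertices) against the total number of edges.
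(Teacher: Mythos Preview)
Your reduction via Edmonds' matching polytope theorem to showing that no induced subgraph is overfull is exactly what the paper does. From that point on, however, the approaches diverge, and your proposal has a genuine gap.

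You set up an induction aiming at $2|E(H)|\le (n-1)\Delta$ for connected $H$, and handle the base range $n\le\Delta+1$ and the case where some leaf bag of a reduced decomposition has at least $2k+1-\Delta$ private vertices. You then explicitly flag the remaining case---every leaf bag has fewer than $k+1-\sqrt k$ private vertices---as ``the main obstacle'' and offer only an expectation that ``a careful global count over the tree decomposition'' will work. That is not a proof; it is precisely the hard step, and nothing in your inductive framework forces it through. Local deletion of a leaf-bag private set $A$ with $|A|<2k+1-\Delta$ genuinely fails your criterion $2e(A)\le|A|(\Delta-|A|+1)$, and there is no evident way to patch this by choosing a cleverer $S$.

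The paper bypasses the induction entirely and proves the stronger uniform bound
\[
2|E(G)|\le \Delta\,|V(G)|-(\Delta-k)(\Delta-k+1),
\]
from which non-overfullness at $\Delta\ge k+\sqrt k$ is immediate. The proof is the global count you were anticipating, carried out cleanly on a \emph{smooth} tree decomposition: for each vertex $v$ one has $\deg(v)\le |T(v)|+k-1$, hence $\Delta-\deg(v)\ge\pos{d-|T(v)|}$ with $d=\Delta-k+1$. Two purely tree-theoretic lemmas then do the work: one bounds $\pos{d-|T(v)|}$ below by a sum of terms $\pos{d-|T_{(s,t)}|}$ over edges leaving $T(v)$, and the other shows that summing such terms over all oriented edges of $T$ yields at least $d(d-1)$. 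Summing over $v$ gives $\sum_v(\Delta-\deg(v))\ge d(d-1)=(\Delta-k)(\Delta-k+1)$, which is the edge bound. No case analysis, no induction on $n$, and the threshold $k+\sqrt k$ falls out at the end from $(\Delta-k)^2\ge k\Rightarrow (\Delta-k)(\Delta-k+1)\ge\Delta$.
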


The theorem follows from a new upper bound on the number of edges:
	\begin{equation*} 
		2 |E(G)| \leq \Delta |V(G)| - (\Delta - k)(\Delta - k +1)
	\end{equation*}
The bound is proved in Proposition~\ref{prop:edge_bound}. It implies quite directly
that no graph with treewidth~$k$ and maximum degree $\Delta\geq k+\sqrt k$
can be overfull. (A graph $G$ is \emph{overfull} if it has an odd number $n$ of vertices
and strictly more than $\Delta(G)\tfrac{n-1}{2}$ edges; a subgraph $H$ of $G$ is an
\emph{overfull subgraph}
if it is overfull and satisfies $\Delta(H)=\Delta(G)$.)

Thus, for certain parameters our conjecture coincides with the overfull conjecture of 
Chetwynd and Hilton~\cite{Che_Hil86}:
\newtheorem*{ofconj}{Overfull conjecture}
\begin{ofconj}
Every graph $G$ on less than $3\Delta(G)$ vertices can be edge-coloured with $\Delta(G)$
colours unless it contains an overfull subgraph.
\end{ofconj}
Because we can exclude that graphs with treewidth $k$ and maximum degree $\Delta\geq k+\sqrt k$
 are overfull, the overfull conjecture (as well as our conjecture) implies that such graphs 
on less than~$3\Delta$ vertices can always be edge-coloured with $\Delta$ colours. 

\medskip
Graphs of treewidth~$k$ are in particular $k$-degenerate (see Section~\ref{Sec:definitions} for the definition of treewidth and Section~\ref{Sec:degenerate} for a discussion on degenerate graphs). Indeed, Vizing~\cite{Vizing_russ} originally showed 
that $k$-degenerate graphs, rather than treewidth~$k$ graphs, of maximum degree $\Delta \geq 2k$ have an
edge-colouring with $\Delta$ colours.
We briefly list some related work  on edge-colourings and their variants
in $k$-degenerate graphs.
Isobe et al.~\cite{I_Zhou_Ni07} show that any $k$-degenerate graph of maximum degree~$\Delta \geq 4k+3$
has a total colouring with only~$\Delta+1$ colours. 
For graphs that are not only $k$-degenerate but also of treewidth~$k$, 
 a maximum degree of $\Delta \geq 3k-3$ already suffices~\cite{BLS16}. 
Noting that they are $5$-degenerate, we include some results on planar graphs as well. 
Borodin, Kostochka and Woodall~\cite{Bo_Ko_Woo97,Bo_Ko_Woo97_1} showed that planar graphs have list-chromatic index $\Delta(G)$ and total chromatic number $\chi''(G)=\Delta(G)+1$ if $\Delta(G) \geq 11$ or if the maximum degree and the girth are at least $5$.
Vizing~\cite{Vizing_russ} proved that a planar graph $G$ has a $\Delta(G)$-edge-colouring if $\Delta(G) \geq 8$. Sanders and Zhao~\cite{SanZhao01} and independently Zhang~\cite{Zha00} extended this to $\Delta(G) \geq 7$.

\nocite{SST+12}

\section{Definitions} \label{Sec:definitions}

All graphs in this article are finite and simple.
 We use standard graph theory notation as found in the book of Diestel~\cite{Die}.

 For a graph $G$ a \emph{tree-decomposition} $(T,\mathcal{B})$ consists
of a tree $T$ and a collection  $\mathcal{B} = \{B_t \text{ : } t \in
V(T) \}$ of \emph{bags} $B_t \subset V(G)$ such that
 \begin{enumerate}[(i)]
  \item $V(G) = \bigcup\limits_{t \in V(T)} B_t,$
  \item for each edge $vw \in E(G)$ there exists a vertex $t \in V(T)$ such that
$v,$ $w \in B_t$, and
  \item if $v \in B_{s} \cap B_{t},$ then $v \in B_r$ for each vertex
$r$  on the path connecting $s$ and $t$ in $T$.
\end{enumerate}
A tree-decomposition $(T,\mathcal{B})$ has \emph{width} $k$ if
each bag has a size of at most $k+1$. The \emph{treewidth} of $G$ is
the smallest integer $k$ for which there is a width $k$
tree-decomposition of $G$. \\
A tree-decomposition  $(T,\mathcal{B})$ of
width $k$ is \emph{smooth} if
\begin{enumerate}[(i)]
\setcounter{enumi}{3}
  \item  $ |B_t| = k+1$ for all $t \in V(T)$ and
  \item  $|B_s \cap B_t| = k$ for all $st \in E(T)$.
 \end{enumerate}
All tree decompositions considered in this paper will be smooth. 
This is possible as a graph of treewidth at most $k$ always has a smooth tree-decomposition of width~$k$; see Lemma 8 in Bodlaender~\cite{Bodlaender98}. 

The fractional chromatic index of a graph $G$ is defined as
\begin{equation*}
 \chi'_f(G) 
= \min 
\left\lbrace
\sum_{M \in \mathcal{M}}  \lambda_M~:~  
	\lambda_M \in \mathbb{R_+},  
	\sum_{M \in \mathcal{M}}  \lambda_M \mathbbm{1}_{ M}(e) = {1}~\quad \forall e \in E(G)
\right\rbrace,
\end{equation*}
where $\mathcal{M}$ denotes the collection of all matchings in $G$ and $\mathbbm{1}_{ M}$  the characteristic vector of $M$. 
For more details on the fractional chromatic index, see for instance Scheinerman and Ullman~\cite{SUBook}.

\section{A bound on the number of edges}

Theorem~\ref{thm:fractional-tw-maxdeg} follows quickly from a bound on the number of edges:

\begin{proposition} \label{prop:edge_bound}
A graph $G$ of treewidth $k$ and maximum degree $\Delta(G) \geq k$ satisfies 

	\begin{equation}\label{eq:edge_bound} 
		2 |E(G)| \leq \Delta(G) |V(G)| - (\Delta(G) - k)(\Delta(G) - k +1).
	\end{equation}

\end{proposition}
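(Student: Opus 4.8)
The plan is to induct on $|V(G)|$ using a smooth tree-decomposition $(T,\mathcal{B})$ of width $k$. The base case will be a graph whose tree $T$ has a single node, so $|V(G)| = k+1$ and $G$ is a subgraph of $K_{k+1}$; here $2|E(G)| \le (k+1)k$, and since $\Delta(G) \le k$ forces $\Delta(G) = k$, the right-hand side of \eqref{eq:edge_bound} equals $(k+1)\Delta - 0 = k(k+1)$, so the bound holds. For the inductive step I would pick a leaf $t$ of $T$ with neighbour $s$, and let $v$ be the unique vertex in $B_t \setminus B_s$ (unique by smoothness). Then $v$ has all its neighbours inside $B_t$, so $\deg_G(v) \le k$. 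Deleting $v$ gives a graph $G' = G - v$ of treewidth at most $k$ (the decomposition restricted appropriately is still smooth of width $k$, possibly after contracting $t$ into $s$) with $|V(G')| = |V(G)| - 1$.

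The subtlety is that $\Delta(G')$ might be smaller than $\Delta(G)$ — indeed $v$ could be adjacent to the unique maximum-degree vertex, or removing $v$ could drop us below the hypothesis $\Delta \ge k$. So I would actually prove the cleaner statement: for every graph $H$ of treewidth at most $k$ and every integer $D \ge k$,
\begin{equation*}
	2|E(H)| \le D\,|V(H)| - (D-k)(D-k+1),
\end{equation*}
which for $D = \Delta(H)$ (when $\Delta(H)\ge k$) gives the proposition, but which is now monotone-friendly in the sense that if it holds for some $D \ge k$ it holds for all larger $D$ precisely when $|V(H)| \ge 2D - 2k + 1$; this side condition is exactly what needs to be tracked. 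Concretely: apply induction to $G' = G - v$ with the \emph{same} parameter $D = \Delta(G)$, getting $2|E(G')| \le D(|V(G)|-1) - (D-k)(D-k+1)$, then add back $2\deg_G(v) \le 2k$, and check that $2k \le D$ — wait, that fails when $D$ is close to $k$.

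So the genuine work is the bookkeeping near the boundary $D \approx k$. The right fix is to strengthen the induction hypothesis by also subtracting a term that counts how many ``small-degree'' vertices we are still allowed; equivalently, track the quantity $\sum_{u}(D - \deg_G(u))$ and show it is at least $(D-k)(D-k+1)$ directly. I would argue: order the vertices $v_1, \dots, v_n$ so that each $v_i$ has at most $k$ neighbours among $v_1,\dots,v_{i-1}$ (a degeneracy order from the tree-decomposition, peeling leaves). Then for $i \le D - k + 1$ the vertex $v_i$ has fewer than $i-1 \le D-k$ earlier neighbours... no — better: $v_i$ has at most $\min(k, i-1)$ neighbours among earlier vertices, hence at most $\min(k,i-1) + (n-i)$ neighbours total, so $\deg_G(v_i) \le \min(k,i-1) + n - i$. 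Summing $\deg_G(v_i)$ over all $i$ and using $\deg_G(v_i) \le D$ as well gives
\begin{equation*}
	2|E(G)| = \sum_{i=1}^n \deg_G(v_i) \le \sum_{i=1}^n \min\bigl(D,\; \min(k,i-1) + n - i\bigr),
\end{equation*}
and the main obstacle is the (elementary but fiddly) verification that this sum is at most $Dn - (D-k)(D-k+1)$: the first few terms $i = 1, \dots, D-k+1$ contribute the deficit, since there $\min(k,i-1)+n-i = i-1+n-i = n-1 < n \le D + (n - D)$ loses $D - (i-1) - (n - i) \cdot[\text{sign}]$... I would set this up as $\sum_i (D - \deg_G(v_i)) \ge \sum_{i=1}^{D-k+1}\bigl(D - (i-1) - (n-i)\bigr) = \sum_{j=0}^{D-k} (D - n + 1 + \ldots)$, and here one uses $n \ge D+1$ together with the range of $i$ to see each such term is at least the corresponding term of $(D-k)(D-k+1) = 2\sum_{j=1}^{D-k} j$. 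Getting the indices and the $n \ge \Delta + 1$ lower bound to line up exactly is the crux; everything else is routine.
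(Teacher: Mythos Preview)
Your degeneracy-order argument cannot reach the full deficit $(\Delta-k)(\Delta-k+1)$; it tops out at half of that. With $v_1,\ldots,v_n$ ordered so that each $v_i$ has at most $k$ neighbours among $v_1,\ldots,v_{i-1}$, the bound $\deg(v_i)\le\min(k,i-1)+(n-i)$ is trivial (equal to $n-1$) whenever $i\le k$, so your ``first few terms'' contribute nothing: $D-(n-1)\le 0$ since $D=\Delta\le n-1$. The only non-trivial range is $i>n-(D-k)$, where $\deg(v_i)\le k+(n-i)$ gives
\[
\sum_{i>n-(D-k)}\bigl(D-\deg(v_i)\bigr)\ \ge\ \sum_{j=0}^{D-k-1}\bigl((D-k)-j\bigr)\ =\ \tfrac12(D-k)(D-k+1),
\]
and this is best possible for $k$-degenerate graphs: the graphs $G_p$ of Section~\ref{Sec:degenerate} are $k$-degenerate and meet~\eqref{eq:kdeg_edge-bound} with equality, so no index-juggling will recover the missing half. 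Your leaf-peeling induction is the same computation in disguise (deleting a leaf of $T$ removes at most $k$ edges while the right-hand side drops by $\Delta$), which is exactly why it, too, only closes for $\Delta\ge 2k$.

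The missing ingredient is genuinely structural: treewidth, unlike degeneracy, forces low-degree vertices at \emph{every} extremity of the tree, not just one. The paper exploits this by first bounding $\Delta-\deg(v)\ge\bigl[d-|T(v)|\bigr]^+$ with $d=\Delta-k+1$ (in a smooth decomposition $\deg(v)\le |T(v)|+k-1$), and then proving two combinatorial lemmas about trees: Lemma~\ref{lem:baumlemma-2} lets this per-vertex deficit be charged to the oriented edges of $T$ leaving the subtree $T(v)$, and Lemma~\ref{lem:baumlemma-1} shows that the total of these charges over all oriented edges of $T$ is at least $d(d-1)$. Informally, each leaf of $T$ contributes a staircase $1+2+\cdots+(d-1)=\tfrac12 d(d-1)$, and a tree has at least two leaves; a degeneracy order peels from only one end and therefore sees only one staircase.
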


Before proving Proposition~\ref{prop:edge_bound} we present one of its consequences:

\begin{lemma}\label{lem:not_overfull}
	Let $G$ be a graph of treewidth at most~$k$ and maximum degree $\Delta\geq k+\sqrt k$. Then $G$ is not overfull.
\end{lemma}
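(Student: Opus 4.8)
The plan is to derive Lemma~\ref{lem:not_overfull} directly from the edge bound in Proposition~\ref{prop:edge_bound}. First I would dispose of the trivial case: if $G$ has an even number of vertices, or if $\Delta(G) < k$, the notion of overfull either does not apply or the bound is not needed, so assume $n := |V(G)|$ is odd and $\Delta := \Delta(G) \geq k + \sqrt{k} \geq k$. Recall that $G$ is overfull precisely when $2|E(G)| > \Delta(n-1)$, i.e.\ when $2|E(G)| \geq \Delta(n-1) + 1$ (using integrality of $2|E(G)|$ and the parity of $n-1$, actually $2|E(G)| \geq \Delta(n-1)+2$ when $\Delta$ is even, but $\geq \Delta(n-1)+1$ suffices). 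So to show $G$ is not overfull it is enough to prove $2|E(G)| \leq \Delta(n-1)$.

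Next I would combine this target with the bound from Proposition~\ref{prop:edge_bound}. It suffices to show
\begin{equation*}
\Delta n - (\Delta - k)(\Delta - k + 1) \leq \Delta(n-1),
\end{equation*}
which rearranges to $\Delta \leq (\Delta - k)(\Delta - k + 1)$. Writing $d := \Delta - k \geq 0$, this is the inequality $d + k \leq d(d+1) = d^2 + d$, i.e.\ $k \leq d^2$, i.e.\ $d \geq \sqrt{k}$, i.e.\ $\Delta \geq k + \sqrt{k}$ --- which is exactly the hypothesis. So the chain of implications closes.

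One subtlety worth handling carefully: Proposition~\ref{prop:edge_bound} is stated for graphs of treewidth exactly $k$ with $\Delta(G) \geq k$, whereas the lemma assumes treewidth \emph{at most}~$k$. I would address this by letting $k'$ denote the actual treewidth of $G$, so $k' \leq k$; since $\Delta \geq k + \sqrt{k} \geq k' + \sqrt{k'}$ still holds (the map $x \mapsto x + \sqrt{x}$ is increasing), one may replace $k$ by $k'$ throughout, apply the proposition with parameter $k'$, and run the same computation. Alternatively, one checks that the right-hand side of~\eqref{eq:edge_bound} is monotone in the relevant direction, but substituting the true treewidth is cleanest.

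There is no real obstacle here --- the lemma is a one-line algebraic consequence of the proposition --- so the only thing to be careful about is the bookkeeping: the precise definition of overfull (strict inequality, odd $n$), the reduction of "not overfull" to $2|E(G)| \leq \Delta(n-1)$, and the treewidth-at-most-$k$ versus treewidth-exactly-$k$ discrepancy just mentioned. I would present it as: assume $n$ odd, substitute the edge bound, and observe that $\Delta \geq k + \sqrt{k}$ is equivalent to $(\Delta-k)(\Delta-k+1) \geq \Delta$, hence $2|E(G)| \leq \Delta n - \Delta = \Delta(n-1)$, so $G$ is not overfull.
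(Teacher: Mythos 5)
Your proposal is correct and takes essentially the same route as the paper: both feed the edge bound from Proposition~\ref{prop:edge_bound} into the overfull condition and reduce to the observation that $\Delta\geq k+\sqrt k$ is equivalent to $(\Delta-k)(\Delta-k+1)\geq\Delta$. The extra care you take over ``treewidth at most $k$'' versus ``treewidth exactly $k$'' is sensible but not strictly needed, since the proof of Proposition~\ref{prop:edge_bound} only uses a smooth width-$k$ tree decomposition, which exists for any graph of treewidth at most $k$.
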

\begin{proof}
	Proposition~\ref{prop:edge_bound} implies
	\[
	\frac{2|E(G)|}{|V(G)|-1} \leq \frac{\Delta |V(G)| -(\Delta-k)(\Delta-k+1)}{|V(G)|-1}
	=\frac{\Delta |V(G)| -(\Delta-k)^2-\Delta+k}{|V(G)|-1}
	\]
	and as $\Delta\geq k+\sqrt k$ we obtain
	\[
	\frac{2|E(G)|}{|V(G)|-1}\leq 
	\frac{\Delta |V(G)| -k-\Delta+k}{|V(G)|-1}=\Delta.
	\]
	This finishes the proof.
\end{proof}

It follows from Edmonds' matching polytope theorem that $\chi'_f(G)= \Delta(G)$, if the 
graph~$G$ does not contain any 
overfull subgraph of maximum degree $\Delta$; see~\cite[Ch. 28.5]{Schri}. 
As the treewidth of a subgraph is never larger than the treewidth of the original graph,
Theorem~\ref{thm:fractional-tw-maxdeg} is a  consequence of Lemma~\ref{lem:not_overfull}.\\

The proof of Proposition~\ref{prop:edge_bound} rests on two lemmas.
We defer their proofs to the end of the section. For a tree $T$ we write $|T|$ to denote the number of its vertices.
If $st\in E(T)$ is an edge of $T$ then we let $\Tsub{s}{t}$ be 
the component of $T-st$ containing $s$. For any number $k$
we set $\pos{k}=\max(k,0)$.

\begin{lemma}\label{lem:baumlemma-1} 
	For a tree $T$ and a positive integer $\newl \leq |T|$ it holds that
	\begin{equation*} 
    	\sum_{(s,t):st\in E(T)} \pos{\newl-|\Tsub{s}{t}|} \geq \newl(\newl-1).
	\end{equation*}
\end{lemma}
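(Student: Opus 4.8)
The plan is to prove the inequality by induction on $|T|$, summing over ordered pairs $(s,t)$ with $st \in E(T)$, and to exploit the fact that the two orientations of an edge $st$ contribute $\pos{\newl - |\Tsub{s}{t}|} + \pos{\newl - |\Tsub{t}{s}|}$, where $|\Tsub{s}{t}| + |\Tsub{t}{s}| = |T|$. The base case $\newl = |T| = 1$ is trivial (both sides are $0$), and more generally when $\newl = 1$ the right-hand side is $0$. So assume $\newl \geq 2$. The idea is to pick a leaf $\ell$ of $T$ with neighbour $p$, set $T' = T - \ell$, and relate the sum over $T$ to the sum over $T'$. For the edge $\ell p$ we have $|\Tsub{\ell}{p}| = 1$ and $|\Tsub{p}{\ell}| = |T| - 1$, so this edge contributes exactly $\pos{\newl - 1} + \pos{\newl - (|T|-1)} = (\newl - 1) + \pos{\newl - |T| + 1}$ to the left-hand side (using $\newl \geq 2$). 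Every other edge $st$ of $T$ is also an edge of $T'$, but the component sizes change: the component not containing $\ell$ keeps its size, while the component containing $\ell$ loses exactly one vertex.

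The key step is to control how the other terms shift when we delete $\ell$. For an edge $st \in E(T')$, orient it so that $\ell \in \Tsub{s}{t}$ in $T$; then $|\Tsub{s}{t}|$ in $T$ equals $|\Tsub{s}{t}|$ in $T'$ plus one, while $|\Tsub{t}{s}|$ is unchanged. Hence $\pos{\newl - |\Tsub{s}{t}|}$ can only decrease by at most $1$ when passing from $T'$ to $T$, and $\pos{\newl - |\Tsub{t}{s}|}$ is unchanged. Summing, the left-hand side for $T$ is at least the left-hand side for $T'$ minus (the number of edges $st \in E(T')$ on the $p$-side whose term is positive), plus the $\ell p$ contribution computed above. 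By induction the $T'$-sum is at least $\newl(\newl-1)$ if $\newl \leq |T'| = |T| - 1$; the subtraction is at most $\newl - 1$ (since at most $\newl - 1$ edges can have $|\Tsub{s}{t}| < \newl$ along any path structure — more carefully, the edges with a positive term form a connected subtree around the small side), and the $\ell p$ contribution is at least $\newl - 1$, so the two balance and the bound is preserved. The remaining case $\newl = |T|$ needs separate handling: here we cannot apply induction to $T'$ directly with the same $\newl$, so instead apply induction with $\newl' = \newl - 1 = |T'|$, obtaining $\newl'(\newl'-1) = (\newl-1)(\newl-2)$, and then account for the gain: each of the terms can only have gone down by shrinking $\newl$ by one, by at most $1$ per edge, over $|T|-1 = \newl - 1$ edges — and the $\ell p$ edge now contributes $(\newl - 1) + \pos{1} = \newl$; a careful bookkeeping shows the total increases by at least $2(\newl - 1)$, giving $(\newl-1)(\newl-2) + 2(\newl-1) = (\newl-1)\newl$ as required.

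The main obstacle I expect is the bookkeeping in the two inductive steps: precisely quantifying how much the positive terms $\pos{\newl - |\Tsub{s}{t}|}$ for the ``surviving'' edges can change, and showing these changes never outweigh the contribution of the deleted leaf edge. The cleanest way to organise this is probably to observe that, for fixed $T$ and $\newl$, the edges $st$ (oriented with $s$ on the small side) that contribute a positive term are exactly those for which $|\Tsub{s}{t}| \leq \newl - 1$; as $t$ ranges, these small sides are nested, and summing $\pos{\newl - |\Tsub{s}{t}|}$ amounts to summing $\newl - i$ over the sizes $i$ that occur. An alternative, possibly slicker, route avoids leaf-deletion entirely: root $T$ arbitrarily, and for each vertex $v$ at depth $d_v$ from the root (counted appropriately), charge the quantity via a telescoping or double-counting argument that directly yields $\sum \pos{\newl - |\Tsub{s}{t}|} \geq \sum_{i=1}^{\newl-1} 2i = \newl(\newl-1)$ — the factor $\newl(\newl-1)$ strongly suggests that the extremal case is a path, where along the path from either end the component sizes are $1, 2, \dots$ and the positive contributions sum to $(\newl-1) + (\newl-2) + \cdots + 1$ from each end. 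Making this heuristic rigorous — showing the path is the worst case — would be the crux, and a clean exchange/shifting argument on the tree structure is likely the right tool.
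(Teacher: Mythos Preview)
Your approach is essentially the paper's: induction via leaf deletion, tracking how the positive terms shift. Two differences are worth noting.

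First, the paper inducts on $|T|-\newl$ rather than on $|T|$. This makes $\newl=|T|$ the \emph{base case}, handled by the one-line computation
\[
\sum_{(s,t):st\in E(T)}\pos{\newl-|\Tsub{s}{t}|}=\sum_{st\in E(T)}\bigl(|T|-|\Tsub{s}{t}|+|T|-|\Tsub{t}{s}|\bigr)=(|T|-1)|T|,
\]
so your separate inductive treatment of $\newl=|T|$ is unnecessary. In the inductive step one then always has $\newl\leq|T|-1=|T'|$, and the induction hypothesis applies directly to $T'$ with the same $\newl$.

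Second, the bound you flag as the ``main obstacle''---that the number $|F|$ of oriented edges whose positive term drops by one is at most $\newl-1$---has a clean proof you only gesture at. With $F$ the set of $(s,t)$ (for $st\in E(T')$) with $\ell$ on the $s$-side and $|\Tsub{s}{t}'|\leq\newl-1$, pick $(x,y)\in F$ with $y$ farthest from $\ell$. Then every other edge of $F$ lies inside $\Tsub{x}{y}'$, so $|F|\leq|E(\Tsub{x}{y}')|+1=|\Tsub{x}{y}'|\leq\newl-1$. Since the leaf edge contributes $\pos{\newl-1}=\newl-1\geq|F|$, the loss is exactly compensated and the $T'$-sum (with $T'$-component sizes) is recovered. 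Your intuition that ``the edges with a positive term form a connected subtree'' is the right picture; this is the argument that makes it rigorous.
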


If $T^*$ is a subtree of $T$ then let  $\delta^+(T^*)$
be the set of ordered pairs $(s,t)$ so that $st$ is an edge of $T$ with 
$s\in V(T^*)$ but $t\notin V(T^*)$.
(That is, $\delta^+(T^*)$ may be seen as the set of oriented edges leaving $T^*$.)

\begin{lemma}\label{lem:baumlemma-2}
	Let $T$ be a tree and let $\newl \leq |T|$ be a positive integer.
	Then for any subtree $T^* \subset T$ it holds that
 		\begin{equation}\label{equ:baumlemma-2}
    		\sum_{(s,t)\in\delta^+(T^*)} \pos{\newl-|\Tsub{s}{t}|}
			\leq 
			\pos{\newl -|T^*|}.
		 \end{equation}
\end{lemma}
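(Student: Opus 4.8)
The plan is to prove the inequality by induction on the number of edges of $T$ leaving $T^*$, i.e. on $|\delta^+(T^*)|$. If $\delta^+(T^*)$ is empty, then $T^* = T$, the left-hand side is an empty sum equal to $0$, and the right-hand side is $\pos{\newl - |T|} = 0$ since $\newl \leq |T|$; so the base case holds. For the inductive step, pick an edge $(s,t) \in \delta^+(T^*)$ with $s \in V(T^*)$, $t \notin V(T^*)$, and let $T^{**} = T^* \cup \Tsub{t}{s}$ be the larger subtree obtained by gluing the component of $T - st$ on the $t$-side onto $T^*$. Then $\delta^+(T^{**})$ is obtained from $\delta^+(T^*)$ by removing the pair $(s,t)$ and adding all pairs $(s',t') \in \delta^+(T^*)$ with $s' \in V(\Tsub{t}{s})$ — equivalently, $\delta^+(T^{**}) \subseteq (\delta^+(T^*) \setminus \{(s,t)\}) \cup \delta^+(\Tsub{t}{s})$ after accounting for orientations, so that $|\delta^+(T^{**})| < |\delta^+(T^*)|$ and the induction hypothesis applies to $T^{**}$.

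The crux is then an inequality relating the three relevant $\pos{\cdot}$-terms. Writing $a = |T^*|$, $b = |\Tsub{t}{s}|$, we have $|T^{**}| = a + b$, so the induction hypothesis gives $\sum_{(s',t')\in\delta^+(T^{**})} \pos{\newl-|\Tsub{s'}{t'}|} \leq \pos{\newl - a - b}$. On the other hand, restricting Lemma~\ref{lem:baumlemma-1} (or rather its local version, the statement~\eqref{equ:baumlemma-2} applied with $T^*$ replaced by the single-edge-removed subtree) to the subtree $\Tsub{t}{s}$ gives $\sum_{(s',t')\in\delta^+(\Tsub{t}{s})} \pos{\newl-|\Tsub{s'}{t'}|} \leq \pos{\newl - b}$. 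Adding the contribution of the edge $(s,t)$ itself, namely $\pos{\newl - |\Tsub{s}{t}|} = \pos{\newl - (|T| - b)}$, and noting $|T| \geq a + b$ — wait, more carefully $|T| = |T^*| + (\text{everything on the }t\text{-side}) \geq a + b$, so $\Tsub{s}{t}$ has at least $a$ vertices fewer than $|T|$... — the point is that one needs
\[
\pos{\newl - |\Tsub{s}{t}|} + \pos{\newl - a - b} \leq \pos{\newl - a},
\]
since combining this with the two sums above telescopes to the desired bound $\pos{\newl - a} = \pos{\newl - |T^*|}$. This last inequality is the elementary fact that $\pos{x - c} + \pos{y} \leq \pos{x}$ whenever $y \leq c$ and $y \geq 0$; here $x = \newl - a$, $c = b$, and $y = \pos{\newl - |\Tsub{s}{t}|} = \pos{\newl - |T| + b - (\text{stuff}) }$, and one checks $y \leq b$ because $|\Tsub{s}{t}| \geq |T| - b \geq \newl - b$ is equivalent to $\newl - |\Tsub{s}{t}| \leq b$, using $|T| \geq \newl$... actually $|T| - b = |\Tsub{s}{t}| \geq a \geq 1$, so one needs $|\Tsub{s}{t}| \geq \newl - b$, i.e. $|T| \geq \newl$, which holds by hypothesis.

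I expect the main obstacle to be bookkeeping the set $\delta^+(T^{**})$ correctly — in particular verifying that the oriented edges of $\delta^+(T^*) \setminus \{(s,t)\}$ partition cleanly into those still leaving $T^{**}$ and those whose tail lies inside the newly-absorbed component $\Tsub{t}{s}$, so that the two induction applications (one to $T^{**}$, one to the subtree $\Tsub{t}{s}$ viewed as a tree in its own right) together account for every term on the left-hand side exactly once. Once that is pinned down, the remaining step is the scalar inequality for $\pos{\cdot}$, which is a short case distinction on the signs of $\newl - a$ and $\newl - a - b$. An alternative, possibly cleaner, route is a direct induction on $|T^*|$ growing $T^*$ one vertex at a time, which keeps $b = 1$ throughout and makes the scalar inequality trivial ($\pos{x-1} + \pos{\newl - |\Tsub{s}{t}|} \leq \pos{x}$ with the second term being $0$ or $1$); I would try that first and fall back on the component-absorption version only if the one-vertex step turns out to disturb $\delta^+$ in an inconvenient way.
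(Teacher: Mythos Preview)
Your branch-absorption approach --- induct on $|\delta^+(T^*)|$, set $T^{**}=T^*\cup \Tsub{t}{s}$ --- works, but two points need cleaning up. First, the bookkeeping is simpler than you fear: since $st$ is the \emph{only} edge of $T$ between $\Tsub{s}{t}$ and $\Tsub{t}{s}$, one has exactly $\delta^+(T^{**})=\delta^+(T^*)\setminus\{(s,t)\}$, with nothing added and no separate appeal to $\Tsub{t}{s}$ required; a single call to the induction hypothesis on $T^{**}$ suffices. Second, your ``elementary fact'' $\pos{x-c}+y\le\pos{x}$ for $0\le y\le c$ is false as stated (take $x<0$, $y>0$). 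What you actually need, with $a=|T^*|$, $b=|\Tsub{t}{s}|$ and $|\Tsub{s}{t}|=|T|-b$, is $\pos{d-|T|+b}+\pos{d-a-b}\le\pos{d-a}$; writing $p=d-a$ and $q=|T|-a-b\ge 0$ (from $T^*\subseteq \Tsub{s}{t}$), this becomes $\pos{p-q}+\pos{p-b}\le\pos{p}$, which holds precisely because $p\le b+q$, i.e.\ $d\le|T|$. You have the right hypothesis in hand, just not a correct general lemma --- a three-line case split on the signs of $p$, $p-q$, $p-b$ does it.

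The paper's proof takes a different and shorter route: it fixes $T^*$ and inducts on $|T|-d$, deleting a leaf $\ell$ of $T$ whose neighbour also lies outside $T^*$. This leaves $\delta^+(T^*)$ unchanged and can only shrink each $|\Tsub{s}{t}|$, so the left side weakly decreases; the base case $d=|T|$ is an exact equality, and if no such $\ell$ exists then every $(s,t)\in\delta^+(T^*)$ has $t$ a leaf, whence $|\Tsub{s}{t}|=|T|-1\ge d$ and the left side vanishes. No scalar inequality is needed at all. Your argument trades that structural observation for an arithmetic one; both are valid, but the leaf-deletion version is cleaner. (Incidentally, your one-vertex-at-a-time alternative does not keep $b=1$ unless the absorbed vertex happens to be a leaf of $T$, which cannot be arranged in general; the whole-branch version is the right one.)
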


We introduce one more piece of notation.
If $(T,\mathcal{B})$ is a tree decomposition of the graph $G$, then 
for any vertex $v$ of $G$
we denote by $T(v)$ the subtree of $T$ that consists of those vertices corresponding to bags
that contain~$v$.

\begin{proof}[Proof of Proposition~\ref{prop:edge_bound}]
Let $(T,\mathcal{B})$ be a smooth tree decomposition of $G$ of width~$k$.
First note that for any vertex $v$ of $G$, the number of vertices
in  the union of all bags 
containing $v$ is at most~$|T(v)|+k$ since the 
tree decomposition is smooth. 
Thus  $\deg(v)\leq |T(v)|+k-1 $. 

Set $\newl =  \Delta -k +1 \geq 1$, and observe that $d\leq |V(G)|-k= |T|$
as the tree decomposition is smooth.
We calculate
	\begin{align*}
		\Delta - \deg(v) &\geq \pos{\Delta - k + 1 -|T(v)|} \\
		&= 
		\pos{\newl -|T(v)|}
   		\geq
		\sum_{(s,t)\in\delta^+(T(v))} \pos{\newl-|\Tsub{s}{t}|},	
	\end{align*}
where the last inequality follows from Lemma~\ref{lem:baumlemma-2}.

Consider an edge $st\in E(T)$. Since the tree decomposition is smooth
there is  exactly one vertex $v \in V(G)$ with $v \in
B_s$ and $v \notin B_t$. Setting
 $\phi((s,t)) = v$ then defines a function
from the set of all $(s,t)$ with $st\in E(T)$ into $V(G)$.
Note that $\phi((s,t))=v$ if and only if $(s,t)\in\delta^+(T(v))$.
Summing the previous inequality over all vertices, we get
  \begin{align*}
   \sum_{v \in V(G)}(\Delta - \deg(v))
   &\geq  
   \sum_{v \in V(G)} \sum_{(s,t)\in\phi^{-1}(v)} \pos{\newl-|\Tsub{s}{t}|}\\
   &=
   \sum_{(s,t): st\in E(T)} \pos{\newl-|\Tsub{s}{t}|}
   \geq
	\newl(\newl-1),
\end{align*}
where the last inequality is due to Lemma~\ref{lem:baumlemma-1}. This directly implies~\eqref{eq:edge_bound}.
  \end{proof}

It remains to prove Lemma~\ref{lem:baumlemma-1} and~\ref{lem:baumlemma-2}.

\begin{proof}[Proof of Lemma~\ref{lem:baumlemma-1}]
We proceed by induction on $|T| -d$.
The induction starts  when $\newl=|T|$. 
Then $\pos{d-|\Tsub{s}{t}|}=d-|\Tsub{s}{t}|$ and thus
\begin{align*}
\sum_{(s,t): st\in E(T)} \pos{\newl-|\Tsub{s}{t}|} 
&=
\sum_{st \in E(T)} \left( |T|-|\Tsub{s}{t}|  
+ |T|-|\Tsub{t}{s}| \right)\\
	&=
	\sum_{st \in E(T)} |\Tsub{t}{s}| + |\Tsub{s}{t}|
	=
	(|T|-1)|T|.
\end{align*}
	
	Now, let $\newl \leq |T| -1$, which implies in particular $|T|\geq 2$.
Then $T$ has a leaf~$\ell$. 
We set $T'= T-\ell$ and 
	note that $\newl \leq  |T| -1=|T'|$.

Observe that for any edge $st\in E(T')$ we get
    \begin{equation*}
    	|\Tsub{s}{t}|  
    	= 
    	\begin{cases}    
    		|\Tsub{s}{t}'|+1     &\text{ if } \ell\in V(\Tsub{s}{t}),\\
       		|\Tsub{s}{t}'|       &\text{ if } \ell\notin V(\Tsub{s}{t}).
        \end{cases}
    \end{equation*}
We denote by $F$ the set of all $(s,t)$ 
for which $st$ is an edge in $T'$ with $\ell\in V(\Tsub{s}{t})$ and 
with  $|\Tsub{s}{t}'| \leq \newl-1$.
Then
\begin{equation} \label{equ:[d-T'st]^+}
\pos{\newl-|\Tsub{s}{t}|}= 
\begin{cases}
\pos{\newl-|\Tsub{s}{t}'|} -1 
& \text{ if }(s,t)\in F, \\
\pos{\newl-|\Tsub{s}{t}'|}   
&\text{ if }(s,t)\notin F.
\end{cases}
\end{equation}

Among the $(s,t)\in F$ choose $(x,y)$ such that $y$ maximises 
the  distance to~$\ell$. This means, that $st \in E(\Tsub{x}{y}')$
for any $(s,t)\in F\sm \{(x,y)\}$.
Consequently, $$|\Tsub{x}{y}'|=|E(\Tsub{x}{y}')|+1\geq |F|-1+1 =|F|.$$

Let   $r$ be the unique neighbour of the leaf $\ell$. Then $|\Tsub{\ell}{r}|=1$, and
we obtain
\begin{equation}\label{equ:max-r}
\pos{\newl-|\Tsub{\ell}{r}|} = \newl-1 \geq |\Tsub{x}{y}'| \geq |F|.
\end{equation}
\noindent
We conclude
\begin{align*}
\sum_{(s,t):st \in E(T)} \pos{\newl-|\Tsub{s}{t}|} 
&=
\pos{\newl-|\Tsub{\ell}{r}|}+ \pos{\newl-|\Tsub{r}{\ell}|} \\
& \qquad +\sum_{(s,t): st\in E(T') } \pos{\newl-|\Tsub{s}{t}|} \\
&\overset{\eqref{equ:max-r}}\geq
|F| + 0 +
\sum_{(s,t): st\in E(T')} \pos{\newl-|\Tsub{s}{t}|} \\
&\overset{\eqref{equ:[d-T'st]^+}}{=}
\sum_{(s,t):st \in E(T')} \pos{\newl-|\Tsub{s}{t}'|} \\
&\geq
\newl(\newl -1),
\end{align*}
where the last inequality follows by induction.
\end{proof}

\begin{proof}[Proof of Lemma~\ref{lem:baumlemma-2}]
	We proceed by induction on $|T|-\newl$.
	For the induction start, consider the case when $\newl=|T|$.
	Then
	\begin{align*}
		\pos{\newl-|\Tsub{s}{t}|} 
		  &= 
		   \pos{|T|-|\Tsub{s}{t}|} = |\Tsub{t}{s}|,
		\intertext{which yields}
		\sum_{(s,t)\in\delta^+(T^*)} \pos{\newl-|\Tsub{s}{t}|}  
		  &=
		 \sum_{(s,t)\in\delta^+(T^*)} |\Tsub{t}{s}|
          =  
         |T| - |T^*| =  \pos{\newl -|T^*|}.
	\end{align*}
	
Now assume $ |T| -\newl \geq 1$.
If every vertex in $T - V(T^*)$ is a leaf of $T$ then 
$t$ is a leaf for every $(s,t)\in\delta^+(T^*)$. This
implies 
$|\Tsub{s}{t}|  = |T| -1 \geq \newl$ and the left hand side of~\eqref{equ:baumlemma-2} vanishes.

Therefore we may assume that there is a leaf $\ell \notin T^*$ of $T$
such that neither $\ell$ nor its unique neighbour belongs to $V(T^*)$.
Set $T' = T - \ell$, and observe that, by choice of $\ell$, the set
$\delta^+(T^*)$ of edges leaving $T^*$ is the same in $T$ and in $T'$.
	Moreover, $|\Tsub{s}{t}|\geq|\Tsub{s}{t}'|$ holds for every $(s,t)\in\delta^+(T^*)$. 
	The desired inequality
	\begin{equation*}
		\sum_{(s,t)\in\delta^+(T^*)} \pos{\newl-|\Tsub{s}{t}|}
		\leq
		\sum_{(s,t)\in\delta^+(T^*)} \pos{\newl-|\Tsub{s}{t}'|}
   	 	\leq 
    	\pos{\newl -|T^*|}
 	\end{equation*}
	now follows by induction.
\end{proof}

\section{A lower bound on the maximum degree} \label{Sec:Proof_2k-1-bound}
Vizing~\cite{Vizing_russ} (see also Zhou et al.~\cite{Zhou_Na_Ni96})
proved that every graph of treewidth $k$ and
 maximum degree $\Delta \geq 2k$ has an edge-colouring with $\Delta$ colours. 
Proposition~\ref{prop:edge-colouring} shows that this bound is not tight.

A graph $G$ of maximum degree $\Delta$ is \emph{$\Delta$-critical}, 
if $\chi(G)=\Delta+1$ and all proper subgraphs can be edge-coloured using not more than $\Delta$ colours. 
For the proof of Proposition~\ref{prop:edge-colouring}
we use Vizing's adjacency lemma, as well as two adjacency lemmas
that involve the second neighbourhood.

\newtheorem*{vlem}{Vizing's adjacency lemma}
\begin{vlem}
Let $uv$ be an edge in a $\Delta$-critical graph. Then 
$v$ has at least $\Delta-\deg(u)+1$ neighbours of degree $\Delta$.
\end{vlem}

\begin{theorem}[Zhang~\cite{Zha00}]  
\label{thm:ajacency-3-path}
Let $G$ be a $\Delta$-critical graph, and let $uwv$ be  a path in $G$. 
If $\deg(u)+\deg(w)= \Delta +2$
then all neighbours of $v$ but $u$ and $w$ have degree~$\Delta$.
\end{theorem}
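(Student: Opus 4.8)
Suppose the statement fails, so $v$ has a neighbour $x\notin\{u,w\}$ with $\deg(x)\le\Delta-1$; the plan is to build a $\Delta$-edge-colouring of $G$, contradicting $\Delta$-criticality. Since $G$ is $\Delta$-critical, fix a $\Delta$-edge-colouring $c$ of the proper subgraph $G-uw$, and for a vertex $y$ let $M(y)$ be the set of colours not appearing on an edge at $y$. As $uw$ is uncoloured, $M(u)\cap M(w)=\es$. Since $u$ has $\deg(u)-1$ and $w$ has $\deg(w)-1$ coloured incident edges, $|M(u)|=\Delta-\deg(u)+1$ and $|M(w)|=\Delta-\deg(w)+1$, and these sum to exactly $\Delta$; being disjoint subsets of the $\Delta$-element colour set, $M(u)$ and $M(w)$ therefore \emph{partition} it. In particular the colour of every edge at $w$ lies in $M(u)$, so $\alpha\coloneqq c(wv)\in M(u)$, while $c(vx)\in M(u)\cup M(w)$ trivially. (Thus $u,w,v,x$ form a Kierstead path for the uncoloured edge $uw$, and what follows is the standard fan/Kempe-chain analysis along such a path.)

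It suffices to recolour some edges of $G-uw$ so that afterwards $\alpha$ is missing at both $u$ and $w$; colouring $uw$ with $\alpha$ then produces a $\Delta$-edge-colouring of $G$. The simplest route is to recolour $wv$: if $M(v)\cap M(w)\ne\es$, give $wv$ a colour from this intersection, freeing $\alpha$ at $w$ (it was already missing at $u$), and we are done. So we may assume $M(v)\subseteq M(u)$. For $\gamma\in M(w)$ and $\delta\in M(v)$ the vertex $v$ is an endpoint of the $(\gamma,\delta)$-Kempe chain (as $\gamma$ is present and $\delta$ missing at $v$); if that chain does not contain $w$, swapping the two colours along the component through $v$ frees $\gamma$ at $v$ without disturbing $M(w)$ — one checks $\alpha\ne\gamma$ and $\alpha\ne\delta$, so $\alpha$ stays missing at $u$ — and then recolouring $wv$ with $\gamma$ and $uw$ with $\alpha$ finishes. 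Hence every such $(\gamma,\delta)$-chain must end at $w$.

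The remaining work is to push this obstruction onto $x$ and reach a contradiction with the partition $M(u)\sqcup M(w)=\{1,\dots,\Delta\}$. One picks a colour $\beta\in M(x)$ — which exists because $\deg(x)\le\Delta-1$ — and analyses the Kempe chains involving $\beta$ together with the colour $c(vx)$ and colours from $M(u)$ and $M(w)$, following them through $x$, $v$ and $w$. In each configuration either one of these chains avoids $w$, in which case a swap followed by a relabelling of $vx$ and of $wv$ makes $\alpha$ missing at $w$ exactly as above; or the chains are forced into a configuration incompatible with $M(u)\sqcup M(w)$ being a partition. Either way we obtain the forbidden colouring of $G$.

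The crux — and the only genuinely delicate point — is the case $\deg(v)=\Delta$, where $M(v)=\es$: then $wv$ cannot be recoloured until a colour is first liberated at $v$, and that liberation has to be engineered through $x$ using essentially only the single colour $\beta$ that $\deg(x)\le\Delta-1$ guarantees to be missing at $x$. Arranging one Kempe swap near $x$ that simultaneously liberates a usable colour at $v$ and keeps the relevant chain away from $w$ — equivalently, ruling out that all the finitely many candidate chains terminate at $w$ at once — is exactly where the hypothesis $\deg(u)+\deg(w)=\Delta+2$ (through the tight partition of the colour set) and the bound $\deg(x)\le\Delta-1$ are both needed. The counting in the first paragraph, the identification of the Kierstead path, and the concluding step of colouring $uw$ are all routine.
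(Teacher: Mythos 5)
The paper does not prove this theorem; it is stated as a known result attributed to Zhang and used as a black box in the proof of Proposition~\ref{prop:edge-colouring}. There is therefore no ``paper's own proof'' to compare against, and the question is simply whether your argument stands on its own.

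Your first two paragraphs are correct. $\Delta$-criticality gives $M(u)\cap M(w)=\emptyset$, and the hypothesis $\deg(u)+\deg(w)=\Delta+2$ gives the exact count $|M(u)|+|M(w)|=\Delta$, so $M(u)\sqcup M(w)$ partitions the $\Delta$-element colour set; $\alpha=c(wv)\in M(u)$; the case $M(v)\cap M(w)\neq\emptyset$ is dispatched at once; and the reduction to ``every $(\gamma,\delta)$-chain with $\gamma\in M(w)$, $\delta\in M(v)$ starting at $v$ ends at $w$'' is a clean standard manoeuvre.

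From there on, however, this is no longer a proof. Your third paragraph does not name which Kempe chains are being swapped, why exactly one of your two alternatives must occur, what the ``relabelling of $vx$ and of $wv$'' consists of, or where the partition $M(u)\sqcup M(w)=\{1,\dots,\Delta\}$ is actually violated; it only asserts that an analysis exists. That analysis is the substance of Zhang's lemma and cannot be left as a gesture. Worse, you yourself flag $\deg(v)=\Delta$ (so $M(v)=\emptyset$) as ``the only genuinely delicate point'' and then supply no argument for it at all: when $M(v)=\emptyset$ the whole of your second paragraph is vacuous, since there is no $\delta\in M(v)$ to choose, so you have not even begun the reduction in that case. Handling it requires first liberating a colour at $v$ via a chain through $x$ (using some $\beta\in M(x)$), then re-running the fan/chain rotation, all while checking that the intermediate swaps keep $\alpha$ missing at $u$ and respect the partition $M(u)\sqcup M(w)$. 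None of that is carried out. As written, the proposal establishes only the routine preliminaries and leaves the actual Kempe-chain case analysis unproved.
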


\begin{theorem}[Sanders and Zhao~\cite{SanZhao01}] 
\label{lem:triangles}
Let $G$ be a $\Delta$-critical graph, and 
let $v$ be a common neighbour of $u$ and $w$ 
such that $\deg(u)  + \deg(v) + \deg(w) \leq 2\Delta +1$. 
Then there are at most $ \deg(u) + \deg(v) - \Delta -3 $ 
common neighbours 
$x \not= u $ of $v$ and $w$.
\end{theorem}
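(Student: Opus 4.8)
The plan is to argue by contradiction, in the same style as Vizing's adjacency lemma and Theorem~\ref{thm:ajacency-3-path}. Suppose $G$ is $\Delta$-critical, $v$ is a common neighbour of $u$ and $w$ with $\deg(u)+\deg(v)+\deg(w)\le 2\Delta+1$, and that the set $S$ of common neighbours of $v$ and $w$ distinct from $u$ satisfies $|S|\ge \deg(u)+\deg(v)-\Delta-2$. Since $G$ is $\Delta$-critical, $G-uv$ has a proper edge-colouring $\phi$ with colour set $\{1,\dots,\Delta\}$; for a vertex $z$ let $\bar\phi(z)$ denote the set of colours not appearing on an edge at $z$. Then $|\bar\phi(u)|\ge \Delta-\deg(u)+1$, $|\bar\phi(v)|\ge \Delta-\deg(v)+1$, and $\bar\phi(u)\cap\bar\phi(v)=\emptyset$ (otherwise a common missing colour would extend $\phi$ to $uv$). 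In particular every colour of $\bar\phi(u)$ is present on an edge at $v$. Finally, the degree hypothesis forces $|\bar\phi(w)|\ge \Delta-\deg(w)\ge \deg(u)+\deg(v)-\Delta-1$, so $w$ misses many colours.

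With these observations the counting is almost immediate once one knows that the $|S|$ colours $\phi(vx)$ for $x\in S$, the colour $\phi(vw)$, and the at least $\Delta-\deg(u)+1$ colours of $\bar\phi(u)$ (all of which lie on edges at $v$) are pairwise distinct: since they all occur among the $\deg(v)-1$ colours used at $v$, this gives $|S|+1+(\Delta-\deg(u)+1)\le \deg(v)-1$, that is $|S|\le \deg(u)+\deg(v)-\Delta-3$, contradicting the assumption. Distinctness within $\{\phi(vx):x\in S\}\cup\{\phi(vw)\}$ is automatic, as these edges share the endpoint $v$, so the task reduces to showing $\phi(vx)\notin\bar\phi(u)$ for every $x\in S$ and $\phi(vw)\notin\bar\phi(u)$. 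This is where Kempe chains enter. Assuming, say, $\phi(vx)=\gamma\in\bar\phi(u)$ for some $x\in S$, pick $\alpha\in\bar\phi(v)$ and consider the $\alpha\gamma$-chains out of $u$ and out of $v$; if they lie in different components, swapping along $v$'s component frees $\gamma$ at $v$ while $u$ still misses $\gamma$, and colouring $uv$ with $\gamma$ contradicts $\chi'(G)=\Delta+1$. Hence the two chains coincide, forming a path $P$ through $v$, $x$ and $u$, and here one must exploit the second neighbourhood: the edges $vw$ and $wx$ together with the large set $\bar\phi(w)$ are used to continue the analysis along a Kierstead-type path of length two rooted at $uv$ (exactly as in Sanders--Zhao and in the proof of Theorem~\ref{thm:ajacency-3-path}) until a contradiction is reached; the argument for $\phi(vw)$ is analogous.

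The main obstacle is precisely this last step: the chain analysis that links the two ``centres'' $v$ and $w$ through their common neighbourhood $S$. Unlike Vizing's adjacency lemma, which is confined to a single edge, here one has to control Kempe chains that travel through vertices of $S$ and rule out the degenerate possibilities one by one --- the $\alpha\gamma$-chain closing up, the vertex $w$ lying on it, some edge $vx$ already carrying a colour of $\bar\phi(u)$, and so on --- each of which costs a case distinction, and it is only when all of them are handled tightly that the sharp constant $-3$ (rather than the easy $-2$) comes out. A convenient way to organise everything is to phrase the argument through the multifan lemma at $v$ and a single Kierstead path at $uv$, after which the combinatorial core becomes a comparison of two explicit sums of $|\bar\phi(\cdot)|$-terms over the vertices $u$, $v$, $w$ and the set $S$.
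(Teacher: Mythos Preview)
The paper does not contain a proof of this theorem: it is quoted from Sanders and Zhao and used as a black box in the proof of Proposition~\ref{prop:edge-colouring}. There is therefore no ``paper's own proof'' to compare your proposal against.

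As for the proposal itself, it is an outline rather than a proof, and you acknowledge this (``The main obstacle is precisely this last step''). The counting reduction is sound: if one could guarantee that $\phi(vw)\notin\bar\phi(u)$ and $\phi(vx)\notin\bar\phi(u)$ for every $x\in S$, then the $|S|+1+|\bar\phi(u)|$ distinct colours appearing at $v$ in $G-uv$ would force $|S|+1+(\Delta-\deg(u)+1)\le\deg(v)-1$ and hence the bound. But that disjointness claim is precisely the substance of the lemma, and your sketch does not establish it. The basic Kempe step you write down --- the $\alpha\gamma$-chain from $v$ must end at $u$ --- is standard and correct, yet it yields no contradiction by itself; from that point on you only assert that ``the edges $vw$ and $wx$ together with the large set $\bar\phi(w)$'' finish the job via ``a Kierstead-type path of length two'', without performing a single further recolouring. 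In particular, nowhere is the hypothesis $\deg(w)\le 2\Delta+1-\deg(u)-\deg(v)$ actually used; it is mentioned, not applied. The closing remark that everything can be ``phrased through the multifan lemma at $v$ and a single Kierstead path at $uv$'' is likewise a description of a strategy, not an argument. To turn this into a proof you would have to carry out the recolourings explicitly and show how each degenerate case (the chain meeting $w$, the colour $\phi(wx)$ interacting with $\bar\phi(u)\cup\bar\phi(v)$, etc.) is excluded --- exactly the work you identify as missing.
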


\newcommand{\q}{q}
\newcommand{\qv}{\q}

\newcommand{\p}{p}
\newcommand{\pw}{\p}

\newcommand{\vs}{v^*}
\newcommand{\ws}{w^*}
\newcommand{\us}{u^*}

\newcommand{\subtree}[1]{\lceil #1\rceil}

The rest of this subsection is dedicated to the proof of Proposition~\ref{prop:edge-colouring}.
To this end, let us assume Proposition~\ref{prop:edge-colouring} 
to be wrong. 
Then there is a  $\Delta$-critical graph $G$ of treewidth at most~$k$
for $\Delta=2k-1$.
(Note that the case $\Delta \geq 2k$ is covered by the above mentioned result of Vizing.)
Let $(T,\mathcal{B})$ be a smooth tree-decomposition of $G$
of width~$\leq k$. By picking an arbitrary root, we may consider $T$ as
a rooted tree. 
For any $s\in V(T)$, we denote by $\subtree s$ the subtree of $T$ rooted at~$s$, that is, the subtree of $T$ consisting of the vertices $t \in V(T)$ for which $s$ is contained in the path between $t$ and the root of $T$.

Recall the definition of $T(v)$ after Lemma~\ref{lem:baumlemma-2}.
Set $L=\{v \in V(G):\deg(v) \geq k+2\}$, and choose 
a vertex $\vs \in L$
that maximises the distance of $T(\vs)$ to the root (among the vertices in $L$). 
Let $\q$ be the vertex of $T(\vs)$ that achieves this distance. 
For $S \coloneqq N(q)\cap T(v^*)$ and any $s\in S$,
define $X_s = \bigcup_{t \in V(\subtree s)}B_{t}$, and let 
$X =  B_{\qv} \cup \bigcup_{s \in S} X_s$.
(See Figure~\ref{fig:edge-colouring} for an illustration.)
 Note that by the definition of $\vs$ and $q$
 \begin{equation}\label{equ:large-vertices-in-T_v}
  N(\vs) \subset X  \text{ and } X \cap L \subset B_{\q}. 
 \end{equation}
\begin{figure}
	\centering
	\begin{tikzpicture}[scale=1]
	\begin{scope}[shift={(0,0)},rotate=270]
	\tikzstyle{vertex}=[line width = 1pt, circle,draw,minimum size=15pt,inner sep=0pt, fill = white]
	\tikzstyle{edge-lin} = [draw,-,line width=2pt]

	\foreach \pos/\name in { {(0,0)/1},
		{(1,1)/2},
		{(0,2)/3},
		{(2.3,1.8)/4}, 
		{(-1,3)/5}, 
		{(1.5,3)/6},
		{(-0.3,4)/7},
		{(-1.2,5)/8},
		{(1.5,6.5)/9},
		{(0,6)/10},
		{(-1,7)/11}}
	\node[vertex, align=center] (\name) at \pos { $v_{\name}$};

	\foreach \name in { 1,2,3,4,5,6,7,8,9,10,11}
	\node[vertex, align=center] (neu\name) at ($(\name) + (0,0)$) { $v_{\name}$};	
	\foreach \source/ \dest  in { neu1/neu2,neu1/neu3,neu2/neu3, neu2/neu4,neu2/neu6, neu3/neu5, neu3/neu6, neu4/neu6, neu5/neu7,neu6/neu7,neu6/neu10,neu7/neu8,neu7/neu9, neu7/neu10, neu8/neu10, neu8/neu11,neu9/neu10,
		neu10/neu11}
	\path[edge-lin] (\source) -- (\dest);

	
	\foreach \f/ \s / \t / \c / \name in {1/2/3/ForestGreen/1,
		2/3/6/WildStrawberry/2,
		2/4/6/CornflowerBlue/3,
		3/5/7/NavyBlue/4,
		3/6/7/Thistle/5,
		6/7/10/Peach/6,
		7/9/10/Magenta/7,
		7/8/10/JungleGreen/8,
		8/10/11/LimeGreen/9}			 
	{   \begin{scope}[transparency group]
		\begin{scope}[blend mode = soft light]
		\draw [rounded corners,fill = \c, fill opacity = 0.6, draw=none]
		(\f.center)--(\s.center)--(\t.center)--cycle;
		\end{scope}
		\end{scope}
		\coordinate (\f\s\t) at ($(barycentric cs:\f=.2,\s=.2,\t=.2) +(3,0)$);
		\node[vertex,fill=\c,align=center, fill opacity = 0.6] (t_\name) at (\f\s\t) {\textcolor{black}{}};
		\node[circle] (x\name) at ($(\f\s\t) + (-3,0)$) {$\textcolor{black}{ B_{\name}}$};
	}

	\foreach \name in { 1,2,3,4,5,6,7,8,9,10,11}
	\node[vertex, align=center] (neu\name) at ($(\name) + (0,0)$) { $v_{\name}$};
	\foreach \name in { 1,2,3,4,5,6,7,8,9}
	\node[vertex, align=center, fill = none] (bla\name) at ($(t_\name.center) + (0,0)$) { $t_{\name}$};	
	
	\foreach \source/ \dest  in {t_1/t_2, t_2/t_3, t_2/t_5, t_4/t_5, t_5/t_6, t_6/t_7, t_6/t_8, t_8/t_9}
	\path[edge-lin] (\source) -- (\dest);
	\foreach \source/ \dest  in { 1/2,1/3,2/3, 2/4,2/6, 3/5, 3/6, 4/6, 5/7,6/7,6/10,7/8,7/9, 7/10, 8/10, 8/11,9/10,
		{10}/11}
	\path[edge-lin] (\source) -- (\dest);
	
	\node[vertex,fill=\mycolour,minimum size=15pt] (asdf) at (10) {$v_{10}$};

	\foreach \source/ \dest  in {t_6/t_7, t_6/t_8, t_8/t_9}
	\path[edge-lin,line width=5pt,\mycolour] (\source) -- (\dest);
	
	
	\node[circle] (bla) at (0.5,7) {\Large \text{$G$}};
	\node[circle] (bla) at (3.5,7) {\Large \text{} $T$};
	\end{scope}
	\end{tikzpicture}
	\caption{A graph $G$ with a smooth tree-decomposition $(T,\mathcal{B})$ of width 2.
		Here, $L = \{v_2,v_3,v_6,v_7,v_{10}\}$.
		If $T$ is rooted in $t_2$ then $v^*=v_{10}$,
$q=t_6$ and $B_q = \{v_7,v_9,v_{10}\}$.
		So, $T(v^*)$ consists of vertices $t_6,t_7,t_8,t_9$, and $S=\{t_7,t_8\}$ and  $X= \{v_7,v_8,v_9,v_{10},v_{11}\}$.
	}
	\label{fig:edge-colouring}
\end{figure}
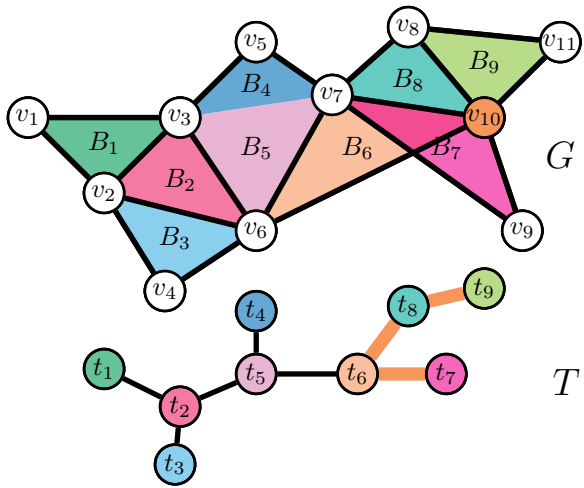
 \begin{claim} \label{cla:no-k+1-vertices}
All vertices of $X \setminus B_q$ have degree at most $k$. 
\end{claim}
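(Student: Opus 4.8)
The plan is to assume, for contradiction, that some $v\in X\setminus B_q$ has degree exactly $k+1$; by~\eqref{equ:large-vertices-in-T_v} a vertex of $X\setminus B_q$ cannot lie in $L$, so $k+1$ is the only remaining degree to exclude. First I would locate $v$ in the decomposition. Since $v\notin B_q$ the subtree $T(v)$ misses $q$; as $v$ lies in a bag of some $\subtree{s}$ with $s\in S$ and $T(v)$ is connected, in fact $T(v)\subseteq\subtree{s}$ for a unique such $s$. Consequently $N(v)\subseteq X_s$, and any neighbour of $v$ lying in $B_q$ is forced into $B_s$ too (its subtree has to run through $s$ to reach $T(v)$). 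Writing $Y:=B_q\cap B_s$, smoothness gives $|Y|=k$ and $N(v)\cap B_q\subseteq Y$.

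Next I would use Vizing's adjacency lemma. As $\Delta=2k-1\ge k+2$ for $k\ge4$, every vertex of degree $\Delta$ lies in $L$, so the at least $k-1$ neighbours of $v$ of degree $\Delta$ that the lemma produces all lie, by the previous paragraph and~\eqref{equ:large-vertices-in-T_v}, in the $k$-element set $Y$. Hence the number $j$ of degree-$\Delta$ neighbours of $v$ is $k-1$ or $k$, and $v$ has exactly $k+1-j\in\{1,2\}$ further neighbours; each of these sits in $X_s\setminus B_q$, so has degree at most $k+1$, and a second application of Vizing (to the corresponding edge) forces its degree to be at least $\Delta-j+1\ge k$.

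The crux is then a short case analysis. If $v$ has two further neighbours $c_1,c_2$, then $j=k-1$ and both $c_i$ have degree exactly $k+1$; $v$ is a common neighbour of $c_1$ and $c_2$ with $\deg(c_1)+\deg(v)+\deg(c_2)=3(k+1)\le 2\Delta+1$ (using $k\ge4$), so Theorem~\ref{lem:triangles} allows at most $\deg(c_1)+\deg(v)-\Delta-3=0$ common neighbours of $v$ and $c_2$ other than $c_1$. But $c_2$ is a degree-$(k+1)$ vertex of $X_s\setminus B_q$, so by Vizing it too has at least $k-1$ degree-$\Delta$ neighbours in $Y$; as $|Y|=k$, these overlap the $k-1$ degree-$\Delta$ neighbours of $v$ in $Y$ in at least $k-2\ge2$ vertices, each a common neighbour of $v$ and $c_2$ different from $c_1\notin B_q$ --- a contradiction. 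A single further neighbour $c$ of degree $k+1$ is reduced to this case (or to the residual case below) by running the same analysis at $c$. Finally, a single further neighbour $c$ of degree $k$ forces $j=k$, so $v$ is adjacent to all of $Y$; since $\deg(c)+\deg(v)=\Delta+2$, Theorem~\ref{thm:ajacency-3-path} applied to each path $c\,v\,y$ with $y\in Y$ shows that every $y\in Y$ has all but at most two of its $\Delta$ neighbours of degree $\Delta$. One closes this off by combining the resulting abundance of degree-$\Delta$ vertices around $Y$ with the maximality defining $v^*$ --- no vertex of $L$ can have its whole subtree strictly below $q$, so any such vertex that meets $Y$ from below $q$ actually lies in $B_q$ --- and with the size bounds $|B_q|=|B_s|=k+1$, which leave no room for them.

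I expect the last family of cases --- where $v$, and possibly a further neighbour, is adjacent to every vertex of $Y$ --- to be the main obstacle. In them the hypothesis of Theorem~\ref{lem:triangles} just fails (the relevant degree sum is $4k=2\Delta+2$, one too large), so the triangle lemma cannot finish the job and one must instead route the second-neighbourhood information of Theorem~\ref{thm:ajacency-3-path} through the smooth tree decomposition, exploiting the extremal position of $q$ and the exact sizes of the bags $B_q$ and $B_s$. Getting these ingredients to cohere across the remaining sub-cases is the delicate part.
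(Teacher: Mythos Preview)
Your proposal is incomplete, and you say so yourself: the ``last family of cases'' is left as a sketch, and the reduction of case~(B) (a single further neighbour $c$ of degree~$k+1$) to the other cases by ``running the same analysis at $c$'' can loop. Concretely, nothing you have written rules out the configuration where $v$ and $c$ are both adjacent to every vertex of $Y$ and to each other; then $v$ is the unique non-$Y$ neighbour of $c$ and vice versa, so the recursion bounces back and forth. In that configuration the hypothesis of Theorem~\ref{lem:triangles} fails (the relevant degree sum is $2\Delta+3$), and Theorem~\ref{thm:ajacency-3-path} does not apply either, since neither $v$ nor $c$ has a neighbour of degree exactly~$k$. There is also a smaller gap in your case~(A): with $j=k-1$ one of the two further neighbours could lie in $Y$ (the one vertex of $Y$ not among the $k-1$ degree-$\Delta$ neighbours), and then you do not know its degree is $k+1$.

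The missing idea is an \emph{extremal choice} of the offending vertex. The paper does not analyse an arbitrary $v\in X_s\setminus B_q$ of degree $k+1$; it picks $w^*\in X_s\setminus B_q$ of degree $\ge k+1$ whose subtree $T(w^*)$ is as far as possible from $s$, and lets $p$ be its top vertex. Because $\deg(w^*)=k+1>|B_p|-1$, some neighbour $u^*$ of $w^*$ lies outside $B_p$; the maximality of $w^*$ then forces $\deg(u^*)\le k$. Vizing's adjacency lemma applied to $u^*w^*$ now gives $w^*$ at least $k$ neighbours of degree~$\Delta$, all necessarily in $B_q\cap B_s$; so $w^*$ is adjacent to every vertex of $B_q\cap B_s$, each has degree $\Delta$, and in particular $\deg(u^*)=k$ exactly. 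Since $v^*\in B_q\cap B_s$, we get $\deg(v^*)=\Delta$ and $v^*w^*\in E(G)$. This is precisely your case~(C), and the finish is the one-line application of Theorem~\ref{thm:ajacency-3-path} you were groping for: on the path $u^*w^*v^*$ one has $\deg(u^*)+\deg(w^*)=\Delta+2$, so every neighbour of $v^*$ other than $u^*,w^*$ has degree $\Delta$; but $\deg(v^*)=2k-1\ge k+3$ and $v^*$ has at most $k$ neighbours inside $B_q$, hence at least three neighbours in $X\setminus B_q$, each of degree $\le k+1$ --- a contradiction. No case analysis, no Theorem~\ref{lem:triangles}, and no vague ``abundance'' argument is needed.
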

 \begin{proof} [Proof of Claim~\ref{cla:no-k+1-vertices}]
  Suppose the statement to be false.
Then there is an $s\in S$ for which $X_s\setminus B_q$ 
contains a vertex of degree  at least  $k+1$.   
 Fix a vertex $\ws \in  \{ w \in X_s \setminus B_q:\deg(w) \geq k+1 \}=: L'$  that 
maximises the distance of $T(\ws)$ to $s$. 
Let $p$ be the vertex of $T(\ws)$ that achieves this distance. Set 
$Y =   \bigcup_{t \in V(\subtree \p)} B_t$. 
As in~\eqref{equ:large-vertices-in-T_v} we have $N(\ws) \subset Y$  and $  Y \cap L' \subset B_{\pw}$.  

By~\eqref{equ:large-vertices-in-T_v}, the vertex  $\ws \in X_s\setminus B_q$ has degree~$k+1$. Thus $\ws$ has a neighbour $\us$ outside $B_p$, which then has degree at most~$k$ (by choice of $\ws$).

Vizing's adjacency lemma implies that $\ws$ has at least 
$\Delta - \deg(u^*) +1 \geq 2k-1-k+1=k$  
neighbours of degree $\Delta$. By~\eqref{equ:large-vertices-in-T_v},
 all vertices of degree $\Delta$ of $Y$ have to be in $B_{\qv} \cap B_s$.  
Since by smoothness of the tree decomposition $B_{\qv} \cap B_s$ is a cutset of size at most~$k$, 
the vertex~$\ws$ is adjacent to all vertices in $B_{\qv} \cap B_s$. As $w^*$ is therefore adjacent to at most $k$ vertices of degree $\Delta$ it holds $\deg(u^*)=k$. 
By definition of $S$, the set $B_s$ contains $v^*$, which implies
that  $\vs$ is adjacent to $\ws$ and of degree~$\Delta$.
As $k\geq 4$, it follows that $\vs$ has degree~$\Delta=2k-1\geq k+3$, which means
by~\eqref{equ:large-vertices-in-T_v} that $\vs$ has at least three neighbours
of degree~$\leq k+1$. Thus, $\vs$ has a neighbour of degree~$\leq k+1$, which 
is neither $u^*$ nor $w^*$. This, however, 
 contradicts Theorem~\ref{thm:ajacency-3-path} (applied to $\vs,\ws,\us$).
 \end{proof}
 
 By~\eqref{equ:large-vertices-in-T_v} and since $v^*$ has degree at least~$k+2$,
the vertex $v^*$ has a neighbour $u\notin B_{\qv}$.
(In fact, $v^*$ has at least two such neighbours.)
 By Vizing's adjacency lemma, applied to $u\vs$, it follows that $\vs$ has
 at least $\Delta - \deg(u) +1 \geq k$
 neighbours of degree $\Delta$. 
{In particular, it follows from $X\cap L\subseteq B_q$, see~\eqref{equ:large-vertices-in-T_v},
that each of these neighbours lies in $B_q$. Since $|B_q|\leq k+1$ we get:}
 \begin{equation}\label{equ:Btw-all-delta-except-v}
\text{\em 
$v^*$ is adjacent to every vertex in $B_{\qv}$,
each of which has degree~$\Delta$. 
}
\end{equation}
{We also observe that $\Delta - \deg(u) +1 > k$ contradicts $|B_q|\leq k+1$, which means that
 \begin{equation}\label{degu}
\text{\em 
every $u\in N(\vs)\sm B_q$ has degree exactly~$k$.
}
\end{equation}
}
 
\begin{claim} \label{cla:neighbours-of-u}
Every $u\in N(\vs) \setminus B_{\qv}$ has exactly $k$ neighbours,
all of which are contained in $B_{\qv}$.
\end{claim}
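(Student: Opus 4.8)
The plan is to show first that every vertex $u\in N(\vs)\setminus B_{\qv}$ has degree at most $k$ (so in fact exactly $k$), and then to argue that all of $u$'s neighbours lie in $B_{\qv}$. The degree bound is already almost in hand: by~\eqref{equ:large-vertices-in-T_v} we have $u\in X$, and since $u\notin B_{\qv}$, Claim~\ref{cla:no-k+1-vertices} gives $\deg(u)\leq k$. For the lower bound, apply Vizing's adjacency lemma to the edge $u\vs$ the other way around, using $\deg(\vs)=\Delta$ (which holds by~\eqref{equ:Btw-all-delta-except-v}, since $\vs$ is adjacent to the full bag $B_{\qv}$ of $\Delta$-vertices and $\deg(\vs)\geq k+2$ with $\Delta=2k-1$): $u$ must have at least $\Delta-\deg(\vs)+1=1$ neighbour of degree $\Delta$, and more usefully, reversing roles, $\vs$ has at least $\Delta-\deg(u)+1$ neighbours of degree $\Delta$; combined with~\eqref{equ:large-vertices-in-T_v} these $\Delta$-neighbours of $\vs$ all lie in $B_{\qv}$, which has size at most $k$, forcing $\Delta-\deg(u)+1\leq |B_{\qv}|\leq k$, i.e.\ $\deg(u)\geq \Delta-k+1=k$. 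Hence $\deg(u)=k$.

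Next I would locate $u$ in the tree decomposition. Since $u\in N(\vs)\setminus B_{\qv}\subseteq X\setminus B_{\qv}$, there is some $s\in S$ with $u\in X_s$; by property~(iii) of tree decompositions the subtree $T(u)$ is contained in $\subtree s$ and is disjoint from $T(\qv)$ above $\qv$, so in particular $u$ appears only in bags strictly below $B_{\qv}$. The key structural fact is that $B_{\qv}\cap B_s$ is a cutset of $G$ of size at most $k$ separating $X_s$ from the rest of $G$ (smoothness of the decomposition). So every neighbour of $u$ is either inside $X_s$ or inside $B_{\qv}\cap B_s\subseteq B_{\qv}$.

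The remaining task is to rule out neighbours of $u$ inside $X_s\setminus B_{\qv}$. Here I would use Claim~\ref{cla:no-k+1-vertices}: all vertices of $X\setminus B_{\qv}$, hence of $X_s\setminus B_{\qv}$, have degree at most $k$. Suppose $u$ had a neighbour $x\in X_s\setminus B_{\qv}$. Apply Vizing's adjacency lemma to the edge $ux$: since $\deg(u)=k$, the vertex $x$ has at least $\Delta-\deg(u)+1=k$ neighbours of degree $\Delta$. But by~\eqref{equ:large-vertices-in-T_v} (in the analogue for $x$, or directly since $x\in X$) every $\Delta$-vertex meeting the subtree $\subtree s$ lies in $B_{\qv}\cap B_s$, which has size at most $k$; so $x$ is adjacent to \emph{all} of $B_{\qv}\cap B_s$, and this cutset must have size exactly $k$, so $B_s\setminus B_{\qv}=\{v\}$ for a single vertex $v$, and $v\in T(\vs)\cap\subtree s$ forces $v=\vs$ by the choice of $\vs,\qv$. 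Thus $\vs$ is adjacent to $x$, and $\deg(\vs)=\Delta=2k-1\geq k+3$ (using $k\geq4$), so by~\eqref{equ:large-vertices-in-T_v} $\vs$ has at least three neighbours of degree $\leq k+1$, hence one such neighbour $y\notin\{u,x\}$. Since $\deg(u)=\deg(x)=k$ we have $\deg(u)+\deg(x)=2k=\Delta+1$, which is not quite $\Delta+2$, so I should instead apply Theorem~\ref{lem:triangles} or redo the count; more carefully, $v^*xu$ is a path and I can apply Theorem~\ref{thm:ajacency-3-path} after noting $\deg(\vs)+\deg(x)$ or $\deg(u)+\deg(x)$ appropriately — the main obstacle is pinning down exactly which adjacency lemma applies to which triple so that the degree sums hit the thresholds ($\Delta+2$ for Theorem~\ref{thm:ajacency-3-path}), and this may require first sharpening $\deg(u)$ or $\deg(\vs)$ slightly, or invoking Theorem~\ref{lem:triangles} for the triangle $\vs,x,u$ if $\vs u$ is also an edge. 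Once the contradiction is reached, $u$ has no neighbour in $X_s\setminus B_{\qv}$, so all $k$ neighbours of $u$ lie in $B_{\qv}\cap B_s\subseteq B_{\qv}$, completing the proof.
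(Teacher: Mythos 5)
There is a genuine gap. You correctly establish $\deg(u)=k$ (lower bound from Vizing's adjacency lemma applied to $u\vs$; upper bound from Claim~\ref{cla:no-k+1-vertices} since $u\in X\setminus B_{\qv}$) and that all of $u$'s neighbours lie in $X$. But for the last step you go looking for a three-vertex configuration $\vs,x,u$ to feed into Theorem~\ref{thm:ajacency-3-path} or Theorem~\ref{lem:triangles}, and you acknowledge yourself that the degree sums never hit the required thresholds, so the argument does not close. The paper's route is much more elementary and does not use the second-neighbourhood lemmas at all here: since $\deg(u)=k$ and $\Delta=2k-1$, any edge $uw$ with $\deg(w)\leq k$ would have $\deg(u)+\deg(w)\leq 2k=\Delta+1$, so in a $\Delta$-edge-colouring of $G-uw$ (which exists by $\Delta$-criticality) each endpoint misses enough colours that, by pigeonhole, some colour is missing at both $u$ and $w$ — that colour then extends the colouring to $G$, a contradiction. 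Hence $u$ has no neighbour of degree~$\leq k$. Combined with Claim~\ref{cla:no-k+1-vertices} (every vertex of $X\setminus B_{\qv}$ has degree $\leq k$) and the fact that $N(u)\subset X$, this immediately forces $N(u)\subset B_{\qv}$. This simple ``no low-degree--low-degree edge in a critical graph'' fact is the missing idea in your write-up.

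There is also a concrete error along the way: you deduce that the unique vertex $v$ in $B_s\setminus B_{\qv}$ equals $\vs$. That cannot be right, because $q\in T(\vs)$ means $\vs\in B_{\qv}$, whereas $v\notin B_{\qv}$ by definition; so $v\neq\vs$, and the subsequent inference that $\vs$ is adjacent to $x$ does not follow from this. (Separately, the inequality $\Delta-\deg(u)+1\leq |B_{\qv}|\leq k$ is stated with the wrong bound, since $|B_{\qv}|=k+1$ by smoothness; what you actually want is that the $\Delta$-degree neighbours of $\vs$ lie in $B_{\qv}\setminus\{\vs\}$, which has size $k$. The conclusion $\deg(u)\geq k$ is still correct.)
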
 

\begin{proof} [Proof of Claim~\ref{cla:neighbours-of-u}]
{By~\eqref{degu}, $u$ has exactly $k$ neighbours.
Since $B_{\qv}$ is a separator,} $u$ has all its neighbours in $X$. However, $u$ 
cannot be adjacent to any vertex $w$ of degree~$\leq k$;  otherwise we could extend any
$\Delta$-edge-colouring of $G-uw$ to $G$.
It follows from Claim~\ref{cla:no-k+1-vertices} that all of the $k$ neighbours of $u$ are in $B_{\qv}$.
\end{proof}

Since  the vertex $v^*$ has degree  at least $k+2$ and since $N(v^*)\subseteq X$, 
by~\eqref{equ:large-vertices-in-T_v}, 
it follows that $\vs$ has two neighbours $u,w$ that are contained in $X\sm B_q$.
By Claim~\ref{cla:neighbours-of-u}, the degree of $u$ and $w$ is~$k$. 
Thus, $\deg(u)+\deg(v^*)+\deg(w)\leq k+\Delta+k=2\Delta+1$.
Moreover, by Claim~\ref{cla:neighbours-of-u} and~\eqref{equ:Btw-all-delta-except-v}, 
the vertices $v^*$ and $w$ have
$k-1$ common neighbours in $B_{\qv}$. As $k-1>\deg(u)+\deg(v^*)-\Delta-3$,
we obtain a contradiction to Theorem~\ref{lem:triangles}.
This finishes the proof of Proposition~\ref{prop:edge-colouring}.

\section{Discussion} \label{Sec:discussion}

Proposition~\ref{prop:edge_bound} bounds the number of edges in a graph $G$ of fixed treewidth and maximum degree. 
A simpler bound~--~only considering the treewidth~--~is easily shown by induction~(see Rose~\cite{Rose74}):
\begin{equation}\label{eq:Rose}
2|E(G)|\leq 2 k |V(G)| - k (k+1)
\end{equation}
For $\Delta<2k$ and $|V(G)|>\Delta+1$ a straightforward computation shows that the bound of Proposition~\ref{prop:edge_bound} is strictly better than~\eqref{eq:Rose}.
The bounds are the same  if $\Delta=2k$ or if $|V(G)|=\Delta+1$. 
For $\Delta=2k$ this is illustrated by the $k$th power $P^k$ of a long path~$P$.

The bound in Proposition~\ref{prop:edge_bound}  is tight.
There are simple examples that show this: take the complete graph $K_k$ on $k$ vertices 
and add $ r \geq 1$ further vertices each adjacent to each vertex of $K_k$.
These graphs also demonstrate that Conjecture~\ref{conjecture} (if true) would be tight
or almost tight. Indeed, if $k+\lfloor \sqrt{k} \rfloor$ is even, and $k$ not a square,
then we obtain for $r=\lfloor \sqrt{k}\rfloor+1$ an overfull graph with maximum degree $\Delta=k+\lfloor \sqrt{k}\rfloor$. 
If  $k+ \lfloor \sqrt{k} \rfloor$ is odd, 
then, by setting $r= \lfloor \sqrt{k} \rfloor$, we obtain an overfull  graph with 
$\Delta = k+\lfloor \sqrt{k} \rfloor-1$.

These tight graphs, however, have a very special structure. 
In particular, they all satisfy $|V(G)|=\Delta(G)+1$. 
Both, Conjecture~\ref{conjecture} and Proposition~\ref{prop:edge_bound},
stay tight for an arbitrarily large number of vertices compared to $\Delta$:

\begin{proposition}\label{prop:tightexamples}
	For every $k_0\geq 4$ there is a $k \in \lbrace  k_0, k_0+1, \ldots, k_0 +8 \rbrace$ such that for every $n\geq 4k$
	there exists a graph $G$ on $n$ vertices with treewidth at most $k$
	and maximum degree $\Delta=k+\lfloor\sqrt k\rfloor< k+\sqrt k$ such that
	\[
	2|E(G)|=\Delta n-(\Delta-k)(\Delta-k+1).
	\]
	In particular, the graph $G$ is overfull whenever $n$ is odd.
\end{proposition}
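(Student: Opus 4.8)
The plan is to construct $G$ explicitly by stringing together copies of a suitable ``gadget'' along a path-like tree decomposition, so that the degree-deficiency inherent in Proposition~\ref{prop:edge_bound} is concentrated in a bounded number of vertices while all other vertices have degree exactly $\Delta$. Since we get to choose $k$ from a window $\{k_0,\dots,k_0+8\}$, I would first fix parity: for a given $k_0$, note that among nine consecutive integers both residues of $k+\lfloor\sqrt k\rfloor$ modulo~$2$ occur, and moreover $\lfloor\sqrt k\rfloor$ is constant on most of the window (it can jump at most once), so we may pick $k$ in the window with $\Delta:=k+\lfloor\sqrt k\rfloor$ of whichever parity makes the arithmetic in the equality $2|E(G)|=\Delta n-(\Delta-k)(\Delta-k+1)$ work out for all $n\ge 4k$. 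Write $r:=\Delta-k=\lfloor\sqrt k\rfloor\ge 2$ (using $k_0\ge 4$), so the target is $2|E(G)|=\Delta n-r(r+1)$.

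The core construction: start from the tight small example of Section~\ref{Sec:discussion}, namely $K_k$ together with $r+1$ (or $r$) extra vertices each joined to all of $K_k$, which already achieves equality with $n=k+r+1$ vertices; call its ``left'' clique $A_0$. To grow the vertex count without creating extra deficiency, I would attach, one after another, blocks $A_1,A_2,\dots$ each of size $k$, where $A_i$ is a clique, $A_{i-1}\cap A_i$ is a set of size $k-1$ (so the tree decomposition stays smooth of width $k$), and additionally a bounded number of vertices in each block are made adjacent across consecutive blocks so that every vertex eventually has degree exactly $\Delta$. Concretely, in a path power $P^k$ every internal vertex already has degree $2k\ge\Delta$, which is too much; so instead I would use a ``thinner'' construction: a sequence of bags $B_1,\dots,B_m$ with $|B_t|=k+1$, $|B_t\cap B_{t+1}|=k$, chosen so that each vertex $v$ lies in exactly $|T(v)|$ consecutive bags with $|T(v)|=\Delta-k+1=r+1$, forcing $\deg(v)\le |T(v)|+k-1=\Delta$ with equality when $v$'s neighbourhood is as large as the tree decomposition permits. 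One checks that the chain of bags has $m=n-k$ vertices in $T$, and summing $\Delta-\deg(v)$ over $V(G)$ equals $r(r+1)=r(r-1)+2r$ only at the two ends — matching the extremal analysis in the proof of Proposition~\ref{prop:edge_bound}, where the whole deficiency $\newl(\newl-1)=r(r+1)$... wait, $\newl=\Delta-k+1=r+1$, so $\newl(\newl-1)=(r+1)r$, consistent with the stated bound $r(r+1)$. So the bound is tight exactly when the tree $T$ is a path and the two ``end subtrees'' of size $<\newl$ contribute fully; this is precisely what a chain of bags with the end gadgets delivers.

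The steps, in order: (1) pick $k$ in the window fixing $\lfloor\sqrt k\rfloor=r$ and the parity of $\Delta$ as needed; (2) describe the vertex set as a union of overlapping size-$k$ cliques along a path, specify exactly which cross-block edges to add so that each non-end vertex has degree $\Delta$; (3) handle the two ends with the $K_k$-plus-extra-vertices gadget (size $r$ or $r+1$ depending on parity) so the total deficiency is exactly $r(r+1)$; (4) verify $|V(G)|=n$ can be made any value $\ge 4k$ by choosing the number of middle blocks (the $4k$ lower bound just ensures there is room for both end gadgets plus at least one clean middle block); (5) check treewidth $\le k$ via the exhibited smooth path decomposition, and read off $2|E(G)|=\sum_v\deg(v)=\Delta n-r(r+1)$; (6) observe overfullness when $n$ is odd follows since $2|E(G)|=\Delta n-r(r+1)>\Delta(n-1)$ iff $r(r+1)<\Delta$, which holds because $r=\lfloor\sqrt k\rfloor$ gives $r(r+1)\le k<\Delta$ — this is exactly the inequality already used in Lemma~\ref{lem:not_overfull}, run in reverse. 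The main obstacle I anticipate is step~(2): engineering the cross-block adjacencies so that \emph{every} middle vertex attains degree exactly $\Delta$ (not $\Delta-1$ or $\Delta+1$) while keeping the graph simple and the decomposition smooth — this is a bookkeeping problem about distributing $r+1$ ``bag-memberships'' and $\Delta$ neighbours consistently, and getting the boundary interface between a middle block and an end gadget exactly right is the fiddly part. The parity juggling over the nine-element window (step~1) is the secondary annoyance, but it is a finite case check.
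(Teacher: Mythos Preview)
Your step~(2) cannot work as described. In a smooth path decomposition with $|B_t|=k+1$ and $|B_t\cap B_{t+1}|=k$, exactly one vertex enters and one leaves between consecutive bags. If every middle vertex lies in exactly $s$ consecutive bags, then a middle bag $B_i$ consists precisely of the vertices that entered at steps $i-s+1,\ldots,i$, which is $s$ vertices; hence $s=k+1$. So for $r=\lfloor\sqrt k\rfloor<k$ there is no smooth width-$k$ path decomposition in which the middle vertices each sit in exactly $r+1$ bags, and the plan of making the bound $\deg(v)\le |T(v)|+k-1$ tight with $|T(v)|=r+1$ throughout collapses. Your ``thinner'' chain simply does not exist, and this is not a bookkeeping issue but a structural obstruction.

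The paper's construction is rather different and does not try to make that bound tight vertex by vertex. It starts from the $(\Delta/2)$-th power of a path $P$, so that all interior vertices already have degree exactly $\Delta$; then it \emph{adds} edges near each end to lift the short end-degrees up to the target values $k,k+1,\ldots,\Delta$. This overshoots the degrees of a block of roughly $\Delta/2$ vertices; the repair is to \emph{delete} from that block a subgraph $H$ with prescribed degree sequence $(1,2,\ldots,c-1,c,\ldots,c)$ where $c=k-\Delta/2$. The existence of $H$ is established via the Erd\H{o}s--Gallai theorem (Lemma~\ref{lem:d_graphic}), and this is where the nine-term window is really used: one needs $4\mid c$, i.e.\ $k\equiv\lfloor\sqrt k\rfloor\pmod 8$, together with $k$ non-square. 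Mere parity of $\Delta$, which is all your step~(1) secures, is not enough. All added edges join vertices at distance at most $k$ along the path, so the final graph sits inside $P^k$ and has treewidth at most $k$.

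A small correction to your step~(6): the claim $r(r+1)\le k$ is false (take $k=5$, $r=2$). What one actually needs for overfullness is $r(r+1)<\Delta=k+r$, i.e.\ $r^2<k$, and this holds exactly because $k$ is chosen non-square.
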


We need the following lemma.

\begin{lemma} \label{lem:d_graphic}
Let $c,r \in \mathbb{N}$.
Then there is a graph with degree sequence
\begin{equation*}
\mathbf{d}=\big(\underbrace{c \ldots, c}_{r+1}, c-1, c-2, \ldots, 1\big) \in \Z^{c+r}
\end{equation*}
if and only if $4$ divides $c(2r+c+1)$ and if $r^2 \geq c$. 
\end{lemma}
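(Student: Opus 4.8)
The plan is to analyse when the prescribed sequence $\mathbf{d}=(c,\dots,c,c-1,\dots,1)$ (with $r+1$ entries equal to $c$) is graphic, and to check that any realising graph has the right treewidth. The parity/divisibility condition is immediate: $\sum \mathbf d = (r+1)c + \tfrac{(c-1)c}{2}$, and an even sum is equivalent to $4 \mid c(2r+c+1)$ after clearing the factor $\tfrac12$; this handles one half of the ``only if'' direction. For the other necessary condition, I would invoke the Erd\H{o}s--Gallai theorem, or simply a direct counting argument: the $c-1$ smallest-degree vertices together with the vertex of degree $c-1$ can absorb only boundedly many edge-endpoints, so the $r+1$ vertices of degree $c$ must be heavily joined among themselves; pushing this through should force $r^2 \ge c$. (Alternatively: the $r+1$ top vertices have degree $c$, but at most $r$ of their neighbours can lie outside this set with ``enough room'', which is exactly where $r^2 \ge c$ enters.)

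For sufficiency I would construct the graph explicitly rather than verify Erd\H{o}s--Gallai. A clean way: start from $K_{c+1}$ on vertices $x_0,\dots,x_c$ (all degrees $c$), then repeatedly delete a carefully chosen edge to lower two degrees by one at a time, walking the degree sequence of the ``tail'' $c-1,c-2,\dots$ down to the target while keeping the $r+1$ top vertices saturated at $c$. The constraint $r^2 \ge c$ is precisely what guarantees there is always enough slack to do this without creating multi-edges; I expect to organise the deletions so that vertex of target degree $c-j$ loses exactly $j$ incident edges, each to a distinct partner among the tail vertices, which needs the tail to be long enough relative to $c$ — again $r^2 \ge c$.

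The treewidth claim is the easy part: any graph on $c+r$ vertices in which a set of size $c$ (here the $K_c$ inside, or more simply: order the vertices so that each vertex has at most $c$ earlier neighbours) dominates is $c$-degenerate, and in fact the construction exhibits a width-$c$ tree-decomposition directly (one bag is the top clique of size $c$ together with one tail vertex at a time, strung along a path), so $\tw \le c$. Actually for the application in Proposition~\ref{prop:tightexamples} what is needed is treewidth \emph{at most} $k$, so it suffices to exhibit such a decomposition, and the max-degree is $c$ by construction.

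The main obstacle is the ``if'' direction: turning $r^2 \ge c$ into an actual construction (or a verification via Erd\H{o}s--Gallai) without case explosion. I would expect the cleanest route is Erd\H{o}s--Gallai applied to the sorted sequence $(c^{(r+1)}, c-1, c-2,\dots,1)$: for each prefix length $m$ one must check $m(m-1) + \sum_{i>m} \min(d_i,m) \ge \sum_{i \le m} d_i$. The inequality is tightest for small $m$ while we are still inside the block of $c$'s, i.e.\ $m \le r+1$, where it reads roughly $m(m-1) + (m)(\text{number of tail entries} \ge m) + (\text{sum of smaller tail entries}) \ge mc$; bounding the tail contribution and solving for when this holds for all such $m$ should distil to exactly $r^2 \ge c$, with the remaining prefixes ($m > r+1$, already into the decreasing tail) being automatically satisfied. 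The bookkeeping in that prefix check is the one genuinely calculational step.
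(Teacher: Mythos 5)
Your primary route (verify the Erd\H{o}s--Gallai criterion) is exactly what the paper does, and your treatment of the parity condition is correct. However, the content of the lemma is the bookkeeping you leave open, and your sketch of where the inequality is tight is off: the block $m \le r+1$ of entries equal to $c$ is \emph{not} where things are delicate. The binding cases are $\ell$ in the range $r < \ell \le c$ (so already into the decreasing tail), split in the paper into $2\ell \le c+r$ and $2\ell > c+r$. In the latter case one reduces to showing a quadratic in $2\ell - (c+r)$ is non-negative, and this in turn uses the parity observation that $c+r = r^2 + r$ is even when $r^2 = c$, ruling out the single potentially negative value. None of that is visible in ``bounding the tail contribution should distil to exactly $r^2 \ge c$''; as written, the proposal defers the entire substance of the proof.

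Two further issues. First, your fallback construction starting from $K_{c+1}$ cannot work: that graph has only $c+1$ vertices while the target degree sequence lives on $c+r$ vertices, so for $r \ge 2$ (the only interesting regime, since $r^2 \ge c$ forces $c \le 1$ when $r \le 1$) you cannot reach the target by only deleting edges. Second, the lemma makes no claim about treewidth or maximum degree; those belong to the proof of Proposition~\ref{prop:tightexamples}, which uses this lemma as a black box. The treewidth paragraph in your proposal is therefore answering a question the lemma does not ask.

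Finally, note that the paper explicitly proves only sufficiency and remarks that necessity ``follows from a closer look at the arguments''; your heuristic for necessity of $r^2 \ge c$ is plausible but is likewise not carried out, which is acceptable given the paper's own scope.
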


We defer the proof of Lemma~\ref{lem:d_graphic} until the end of the section and only show sufficiency. A closer look at the arguments in the proof yields necessity.

\begin{proof}[Proof of Proposition~\ref{prop:tightexamples}]

We start by showing with a case distinction that there is a $k  \in  \{  k_0, k_0 +1, \ldots, k_0+ 8\}$ such that
\begin{equation}
k\equiv \left\lfloor \sqrt{ k} \right\rfloor \pmod 8 \text{ and } \left\lfloor \sqrt{ k} \right\rfloor <  \sqrt{ k} . \label{eq:mod8}
\end{equation}
To this end, let $i$ such that $ \lfloor \sqrt{k_0} \rfloor \equiv  k_0 + i \pmod 8$ and $0 \leq i \leq 7$. 

Firstly, let us assume that $i = 0$.
 If $k_0$ is not a square, then $k=k_0$ satisfies~\eqref{eq:mod8}.
 Otherwise $k=k_0+8$ satisfies \eqref{eq:mod8} as $k_0\geq 4>1$, and consequently $\lfloor \sqrt{k_0+8} \rfloor=\sqrt{k_0}$.

Secondly, we consider the case that $i \neq 0$. 
 If $\lfloor \sqrt{k_0+i} \rfloor = \lfloor \sqrt{k_0} \rfloor $, then $k=k_0+i$ satisfies $\lfloor \sqrt{k_0}\rfloor \equiv k \pmod 8$ and $\sqrt{k} > \sqrt{k_0} \geq \lfloor \sqrt{k_0} \rfloor = \lfloor \sqrt{k} \rfloor $, which shows~\eqref{eq:mod8}.
 If, on the other hand, $\lfloor \sqrt{k_0+i} \rfloor > \left\lfloor \sqrt{k_0} \right\rfloor $, then $\lfloor \sqrt{k_0+i} \rfloor = \lfloor \sqrt{k_0} \rfloor +1= \lfloor \sqrt{k_0+i+1} \rfloor$ as $k_0 \geq 4$. Set  $k = k_0 + i +1$. By choice of $i$, we have  $\lfloor \sqrt{k_0} +1 \rfloor  \equiv k \pmod 8$. Thus, we obtain $\lfloor \sqrt{k} \rfloor  \equiv k \pmod 8$ as desired. Moreover, $ \sqrt{k} > \sqrt{k_0+i} \geq \lfloor \sqrt{k_0+i} \rfloor = \lfloor \sqrt{k_0} \rfloor +1= \lfloor \sqrt{k} \rfloor.$

In all cases an element of $ \{ k_0, k_0 +1, \ldots, k_0+ 8 \}$ satisfies \eqref{eq:mod8}. 

Next  we show that for any $n \geq 4k$, there is a graph $G$ of treewidth $k$ whose degree sequence 
$\left( \deg_G(v_1), \deg_G(v_2), \ldots, \deg_G(v_n)  \right)$ equals
\begin{equation}\label{target}
(k, k+1, \ldots, \Delta-1, \Delta, \ldots, \Delta, \Delta-1, \ldots, k+1, k)
\end{equation}
with $\Delta= k + \lfloor \sqrt{ k} \rfloor $.
A computation similar to Lemma~\ref{lem:not_overfull} shows that $G$ is overfull if $|V(G)|$ is odd.

 We construct $G$ in three steps. First we take a power of a path, where all but the outer vertices 
have the right degree. We increase the degree of the outer vertices by connecting them to vertices towards 
the middle of the path. This will create some degree excess for the used vertices. 
We balance this by deleting a subgraph $H$ provided by Lemma~\ref{lem:d_graphic}. The construction is illustrated 
in Figure~\ref{constructfig}. Note that for ease of exposition the parameters $k$ and $\Delta$ are not as in this proof.

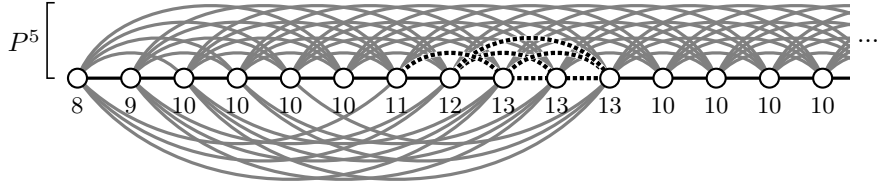
\begin{figure}[htb]
	\centering
	\begin{tikzpicture}[node distance=0mm and 0mm]
	
	\def\xstep{0.7}
	\def\vxnumber{15}
	
	\tikzstyle{gedge}=[hedge,gray]
	
	\draw[semithick] (0.4,0) -- ++(-0.1,0) -- ++(0,1) -- ++(0.1,0);
	\node at (0,0.5) {$P^5$};
	\node at (\vxnumber*\xstep+0.6,0.5) {$...$};
	
	\clip (0.5,-1.5) rectangle (\vxnumber*\xstep+0.5*\xstep,1.5);
	
	\foreach \i in {1,...,\vxnumber}{
		\draw[hedge] (\i*\xstep,0) -- (\i*\xstep+\xstep,0);
		\draw[gedge,bend left=55] (\i*\xstep,0) to (\i*\xstep+2*\xstep,0);
		\draw[gedge,bend left=60] (\i*\xstep,0) to (\i*\xstep+3*\xstep,0);
		\draw[gedge,bend left=65] (\i*\xstep,0) to (\i*\xstep+4*\xstep,0);
		\draw[gedge,bend left=70] (\i*\xstep,0) to (\i*\xstep+5*\xstep,0);
	}
	
	\foreach \i in {1,2,3,4,5}{
		\draw[gedge,bend right=45] (\i*\xstep,0) to (\i*\xstep+6*\xstep,0);
	}
	\foreach \i in {1,2,3,4}{
		\draw[gedge,bend right=50] (\i*\xstep,0) to (\i*\xstep+7*\xstep,0);
	}
	\foreach \i in {1,2,3}{
		\draw[gedge,bend right=55] (\i*\xstep,0) to (\i*\xstep+8*\xstep,0);
	}
	
	\draw[medge] (9*\xstep,0) -- (10*\xstep,0) -- (11*\xstep,0);
	\draw[medge,bend left=55] (8*\xstep,0) to (10*\xstep,0); 
	\draw[medge,bend left=55] (9*\xstep,0) to (11*\xstep,0); 
	\draw[medge,bend left=60] (8*\xstep,0) to (11*\xstep,0); 
	\draw[medge,bend left=55] (7*\xstep,0) to (9*\xstep,0);

	\foreach \i in {1,...,\vxnumber}{
		\node[hvertex] (v\i) at (\i*\xstep,0){};
	}

	\foreach \i in {3,4,5,6,12,13,14,15}{
		\node[below=of v\i] {\small $10$};
	}
	\node[below=of v1] {\small $8$};
	\node[below=of v2] {\small $9$};
	\node[below=of v7] {\small $11$};
	\node[below=of v8] {\small $12$};
	\node[below=of v9] {\small $13$};
	\node[below=of v10] {\small $13$};
	\node[below=of v11] {\small $13$};

	\end{tikzpicture}
	\caption{Extreme example for $k=8$ and $\Delta=10$. The graph $H$ is dotted.}\label{constructfig}
\end{figure}

Let $P$ be a $\Delta/{2}$-th power of a path on vertices $v_1, \ldots, v_n$. This means, $v_i$ and $v_j$ are adjacent if and only if  $0 < |i - j| \leq \Delta/{2}$. As $P$ is symmetric, and as $G$ will be symmetric as well, we concentrate on the part of $P$ on the vertices $v_1, \ldots, v_{\lceil {n}/{2} \rceil}$. We tacitly agree that any additions and deletions of edges are also applied to the other half of $P$.

Comparing the degrees of $P$ to~\eqref{target} we see that all vertices have the target degree except for the initial 
vertices $v_1, \ldots, v_{\Delta / 2}$, whose degree is too small. For $i=1,\ldots, \Delta-k$ the vertex $v_i$ has 
degree~$\Delta/2-1+i$ but should have degree~$k-1+i$. We fix this by connecting $v_i$ to $v_{i+\Delta/2+1},\ldots, v_{i+k+1}$. 
For $i=\Delta-k+1,\ldots,\Delta/2$, the vertex $v_i$ should have degree~$\Delta$ but has 
degree~$\Delta/2-1+i$. We make $v_i$ adjacent to each of $v_{i+\Delta/2+1},\ldots,v_{\Delta+1}$.

Denote the obtained graph by $P'$ and observe that its vertices in the range of $1,\ldots , \left\lceil n / 2\right\rceil $ 
have the following degrees
\begin{equation*}
\underbrace{k, k+1, \ldots, \Delta}_{1, \ldots, \Delta -k +1}, 
\underbrace{\Delta, \ldots, \Delta}_{ \Delta - k+2, \ldots, \Delta / 2+1}, 
\underbrace{\Delta+1, \ldots, k + \tfrac{\Delta}{2}}_{\tfrac{\Delta}{2} +2, \ldots, k+1},
\underbrace{ k + \tfrac{\Delta}{2}, \ldots, k+\tfrac{\Delta}{2}}_{k+2, \ldots, \Delta +1}, 
\underbrace{ \Delta, \ldots, \Delta}_{\Delta +2, \ldots, \left\lceil n / 2\right\rceil}
\end{equation*}

Hence all but the vertices $v_i$ with index $i$ between $\Delta/2 +2$ and $\Delta+1$ have the correct degree. 
The difference between their degree in $P'$ and the desired degree is 
\begin{equation} \label{SequenceDiffOfDegrees}
\mathbf{d} = \big( 1,2,\ldots,k-\tfrac{\Delta}{2}-1 ,\underbrace{k-\tfrac{\Delta}{2} ,\ldots, k - \tfrac{\Delta}{2} }_{\Delta -k +1} \big).
\end{equation}
Set $c = k- \frac{\Delta}{2}= \tfrac{1}{2} \left( k- \lfloor \sqrt{k} \rfloor \right)$ and $r = \Delta -k$. 
Note that $k$ is chosen in such a way (see \eqref{eq:mod8}) that $c$ is divisible by~$4$. As furthermore  
$r^2=(\Delta-k)^2 = \lfloor \sqrt{k} \rfloor^2 \geq \frac{1}{2} \left(k- \lfloor \sqrt{k}\rfloor\right)=c
$,  Lemma~\ref{lem:d_graphic} yields that there is a graph $H$ with degree sequence $\mathbf{d}$. 
Since the vertices  $v_{\Delta/2 +2}, \ldots, v_{\Delta +1}$ induce a complete graph in $P'$ there is a copy of $H$ in $P'$, 
such that deleting its edges results in a graph $G$ of the desired degree sequence. Note that for any two adjacent 
vertices $v_i$, $v_j$ in $P'$ it holds that $|i -j| \leq k$. This implies that $P'$ is a subgraph of a $k$-th power of a path. Thus the subgraph $G$ of $ P'$ has treewidth at most $k$. This finishes the proof.
\end{proof}

To prove Lemma~\ref{lem:d_graphic} we use the Erd\H os-Gallai-criterion:
\begin{theorem}[Erd\H os and Gallai~\cite{EG}]\label{thm:erdos-gallai}
	There is a graph with degree sequence $d_1 \geq \cdots \geq d_{n}$
	if and only if $\sum_{i=1}^nd_i$ is even and if for all $\ell=1, \ldots, {n}$
	\begin{equation}\label{eq:EG}
	\sum\limits_{i=1}^\ell d_i 
	\leq 
	\ell(\ell-1) + \sum\limits_{i=\ell+1}^{n} \min(d_i, \ell).
	\end{equation}
\end{theorem}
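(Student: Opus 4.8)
The plan is to treat the two directions separately: necessity is a short counting argument, and for sufficiency I would use an induction on $\sum_i d_i$ in the style of Choudum's proof.

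For necessity, let $G$ be a simple graph with vertices $v_1,\dots,v_n$ and $\deg_G(v_i)=d_i$. Then $\sum_i d_i=2|E(G)|$ is even. Fix $\ell\in\{1,\dots,n\}$ and set $A=\{v_1,\dots,v_\ell\}$. Each edge inside $A$ contributes $2$ to $\sum_{i\le\ell}d_i$, and there are at most $\binom{\ell}{2}$ of them, contributing at most $\ell(\ell-1)$ in total; each edge leaving $A$ contributes $1$, and for every $j>\ell$ the vertex $v_j$ is incident with at most $\min(d_j,\ell)$ edges leaving $A$. Adding these bounds over all edges incident with $A$ yields exactly \eqref{eq:EG}.

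For sufficiency I would induct on $\sigma:=\sum_{i=1}^n d_i$. If $\sigma=0$ the edgeless graph works. If $\sigma\ge 2$, I first delete trailing zeros from $\mathbf d$ (this changes neither the parity of $\sigma$ nor \eqref{eq:EG} in the range of $\ell$ that matters), so I may assume $d_n\ge 1$; and if all $d_i$ are equal to some $r$, then \eqref{eq:EG} at $\ell=1$ forces $r\le n-1$ while $nr$ is even, so a circulant graph realises $\mathbf d$. Otherwise let $t$ be the largest index with $d_t=d_1$, and let $\mathbf d'$ be obtained from $\mathbf d$ by subtracting $1$ from the entries $d_t$ and $d_n$ (distinct indices, as $\mathbf d$ is non-constant) and re-sorting. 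One checks that $\mathbf d'$ is again non-increasing and still satisfies the hypotheses of the theorem; then the induction hypothesis gives a simple graph $G'$ realising $\mathbf d'$, and one recovers a realisation of $\mathbf d$ as follows. Let $x$ be the vertex of $G'$ whose degree should be $d_t$ (so $\deg_{G'}(x)=d_t-1$) and $y$ the vertex whose degree should be $d_n$ (so $\deg_{G'}(y)=d_n-1$). If $x$ and $y$ are non-adjacent in $G'$, add the edge $xy$. If they are adjacent, first apply a single $2$-switch — remove two suitable edges and add two suitable non-edges, preserving all degrees — to reach another realisation of $\mathbf d'$ in which $x$ and $y$ are non-adjacent, and then add $xy$.

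Two points need care, and they are the heart of the argument. First, that $\mathbf d'$ inherits \eqref{eq:EG}: the sum drops by $2$ and stays even, and for each $\ell$ one compares the two sides — for $\ell\ge t$ the left-hand side loses at least $1$ and the right-hand side at most $1$, so the inequality survives automatically, whereas for $\ell<t$ the left-hand side is unchanged and the right-hand side may lose up to $2$, so one must exhibit the required slack in the original inequality, using $d_1=\cdots=d_t$ together with \eqref{eq:EG} at suitable indices. Second, that the $2$-switch above exists when $xy\in E(G')$: here one uses $\deg_{G'}(x)\ge\deg_{G'}(y)$, which (since $\mathbf d$ is non-constant, hence $d_t>d_n$) produces a neighbour of $x$ that is not a neighbour of $y$, and the bound $\deg_{G'}(x)=d_1-1\le n-2$ from \eqref{eq:EG} at $\ell=1$, which produces a non-neighbour of $x$; an alternating four-cycle built from these vertices gives the switch. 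I expect this second step — locating the $2$-switch in every configuration — to be the main obstacle: it is entirely elementary but needs a small case analysis, and the reason it works is precisely the choice to decrement a maximum entry and a minimum entry of $\mathbf d$.
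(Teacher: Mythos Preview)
The paper does not prove this theorem at all: it is quoted with a reference to Erd\H os and Gallai and then used as a black box in the proof of Lemma~\ref{lem:d_graphic}. So there is no ``paper's own proof'' to compare against.

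Your proposal is a sound outline of Choudum's short inductive proof, and the necessity argument is correct as written. For sufficiency, the two places you flag are exactly the places where the work lies. The verification that $\mathbf d'$ still satisfies \eqref{eq:EG} for $\ell<t$ is the substantive computation and is where Choudum's argument spends most of its effort; ``using $d_1=\cdots=d_t$ together with \eqref{eq:EG} at suitable indices'' is the right hint, but in a full write-up you would need to carry this out. For the reconstruction step, your claim that a \emph{single} $2$-switch always suffices is slightly optimistic as stated: having a neighbour $z$ of $x$ outside $N(y)$ and a non-neighbour $w$ of $x$ does not by itself guarantee an alternating four-cycle through $xy$, since you also need $zw\in E(G')$. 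The standard fix is a tiny case split: if $zw\in E(G')$ one performs the switch $\{xy,zw\}\to\{xw,yz\}$ and then adds $xy$; if $zw\notin E(G')$ one instead passes directly to a realisation of $\mathbf d$ by a different local modification (or chooses $w$ more carefully among the non-neighbours of $x$). Either way the argument goes through, so the approach is correct; just be aware that ``a single $2$-switch'' needs this small extra case.
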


\begin{proof}[Proof of Lemma~\ref{lem:d_graphic}]
	We check the conditions of Theorem~\ref{thm:erdos-gallai} for the degree sequence $\mathbf{d}$. The parity condition holds as $4$ divides $c(2r+c+1)$ and
	$$\sum_{i=1}^{c+r}d_i=cr+\frac{c(c+1)}{2} = \frac{c}{2}(2r+c+1).$$
	
	Let of us now verify~\eqref{eq:EG}. If $\ell > c$, then
	\[
	\sum_{i=1}^\ell d_i\leq c\ell \leq \ell(\ell-1) \leq 
	\ell(\ell-1) + \sum_{i=\ell+1}^{c+r} \min(d_i, \ell).
	\]
	Thus we can assume that $\ell \leq c$. Two remarks: Firstly,  $\min(d_i, \ell)=\ell$ for $i=1, \ldots, \leq c+r-\ell+1$. Consequently, if $2 \ell \leq c+r$ then
	\begin{align}\notag
	\ell(\ell-1) +\sum_{i=\ell+1}^{c+r} \min(d_i, \ell) 
	&=\ell(\ell-1)+(c+r-2\ell+1)\ell + \frac{\ell(\ell-1)}{2}\\
	&=\tfrac{\ell}{2}(2r-1-\ell)+c\ell. \label{RHS}
	\end{align}
	Secondly,  if $\ell > r$, then
	\begin{equation}\label{blurb}
	\sum_{i=1}^\ell d_i  = c\ell-\frac{(\ell-r-1)(\ell-r)}{2}
	=c\ell+\tfrac{\ell}{2}(2r+1-\ell)-\tfrac{1}{2}(r^2+r).
	\end{equation}

	Now suppose that $2\ell\leq c+r$. For $\ell\leq r$, we have $\sum_{i=1}^\ell d_i=c\ell$ and hence~\eqref{eq:EG} is easily seen to be satisfied in light of~\eqref{RHS}. On the other hand, for $\ell > r$ the assumption of $r^2 \geq c$ together with a comparison of~\eqref{RHS} and~\eqref{blurb} gives~\eqref{eq:EG}.

	So let $2\ell>c+r$. This implies that  $\ell > r$. Consequently, the right hand side of~\eqref{eq:EG} is
	\begin{align*}
	\ell(\ell-1)+\sum_{i=\ell+1}^{c+r} \min(d_i, \ell)
	&=\ell(\ell-1)+\sum_{i=\ell+1}^{c+r} d_i\\
	&=\ell(\ell-1) + \tfrac{1}{2}(c+r-\ell)(c+r-\ell+1).
	\end{align*}
It follows from equation~\eqref{blurb}  that \eqref{eq:EG} is satisfied if the following expression is non-negative. 
\begin{align}
&2\ell(\ell-1) + (c+r-\ell)(c+r-\ell+1) -(2 c\ell+ \ell(2r+1-\ell)-(r^2+r)) \nonumber\\
&= 4\ell^2   -4 \ell(c+r)  +   (c+r)^2  +(c+2r+r^2)  - 4 \ell \nonumber\\
&= (2 \ell - (c+r))^2  +(c+r) +(r+r^2)  - 4 \ell \nonumber\\
&= (2 \ell - (c+r))^2  -2\Big(2\ell - \frac{(c+r) +(r+r^2)}{2}\Big)    \label{eq:calc}
\end{align}
First, let $r^2=c$. Then \eqref{eq:calc} equals 
\begin{equation}
(2 \ell - (c+r))^2  -2 (2 \ell- (c+r)) \label{eq:f-function}
\end{equation}
The term~\eqref{eq:f-function} is negative only if $2 \ell -(c+r)=1$. As $c+r=r^2+r$ is even (for any integer $r$), \eqref{eq:f-function}  and thus \eqref{eq:calc} is non-negative. 

Now let $r^2 > c$. Then \eqref{eq:calc} is strictly greater than \eqref{eq:f-function} and hence non-negative.
This shows that \eqref{eq:EG} is satisfied. 

As \eqref{eq:EG} holds for all $\ell$, there is a graph with degree sequence $\mathbf{d}$.
\end{proof}

\section{Degenerate graphs}
\label{Sec:degenerate}
Recall that a graph $G$ is \emph{$k$-degenerate} if there is an enumeration $v_n,\ldots, v_1$
of the vertices such that $v_{i-1}$ has degree at most~$k$
in $G-\{v_n,\ldots, v_i\}$ for every~$i$. 
By simple induction following the elimination order (or recalling the formula $1+2+\ldots +N=N(N+1)/2$), we can obtain a bound with half the degree loss of \eqref{eq:edge_bound}:
\begin{equation} \label{eq:kdeg_edge-bound}
2 |E(G)| \leq \Delta |V(G)| - \tfrac{1}{2}(\Delta - k)(\Delta - k +1).
\end{equation}

\begin{figure}
	\begin{center}
		\begin{tikzpicture}
		
		\tikzstyle{hugenode}=[draw,circle,line width=3pt,color=gray,minimum size=2cm]
		\tikzstyle{hugeedge}=[line width=10pt,color=gray]
		\tikzstyle{smallvx}=[thick,circle,inner sep=0.cm, minimum size=2.5mm, fill=white, draw=black]
		\tikzstyle{smalledge}=[very thick]
		
		\def\hugeradius{2cm}
		\def\smallradius{0.4cm}
		
		\def\smallcomplete#1{
			\pgfmathsetmacro{\angle}{360/(#1+1)}
			\foreach \k in {0,...,#1}{
				\node[smallvx] (s\k) at (\k*\angle+90:\smallradius){};
			}
			\foreach \k in {0,...,#1}{
				\foreach \l in {\k,...,#1}{
					\draw[smalledge] (s\k) -- (s\l);
				}
			}
		}
		
		\foreach \i in {0,1,2,3,4}{
			\node[hugenode] (H\i) at (\i*72+90:\hugeradius){};
			\node[smallvx,fill=black] (u\i) at (\i*72+90:1.3*\hugeradius){};
		}
		\foreach \i in {1,2,3,4}{
			\begin{scope}[shift={(\i*72+90:0.9*\hugeradius)},rotate=\i*72]
			\smallcomplete{\i}
			\end{scope}
		}
		\node[smallvx]  at (90:0.9*\hugeradius){};
		
		\foreach \i in {0,1,2,3,4}{
			\foreach \j in {\i,...,4}{
				\draw[hugeedge] (H\i) -- (H\j);
			}
		}
		
		\foreach \i in {0,1,2,3}{
			\pgfmathtruncatemacro{\prev}{\i+1}
			\draw[very thick,dashed,-latex,bend left=30,shorten <=2pt,shorten >=2pt] (u\prev) to (u\i);
		}
		
		\end{tikzpicture}
	\end{center}
	\caption{The graph $G_5$ with the vertices $v_i$ drawn in black; thick gray edges
		indicate that two vertex sets are complete to each other; elimination order of the $v_i$
		is shown in dashed lines}
	\label{fig:G5}
\end{figure}

The bound in \eqref{eq:kdeg_edge-bound} turns out to be tight for some $\Delta, k$ as the construction below shows. Moreover, by~\eqref{eq:kdeg_edge-bound}, Theorem~\ref{thm:fractional-tw-maxdeg} can easily be transferred: 
any simple $k$-degenerate graph of maximum degree $\Delta\geq k + 1/2 + \sqrt{2k+1/4} $  is not overfull and therefore has fractional chromatic index $\chif(G)=\Delta$.

Consider a positive integer $p$ and let $G_p$ be the complement of the disjoint union of $p$
stars $K_{1,1},K_{1,2},\ldots, K_{1,p}$;
see Figure~\ref{fig:G5}. Denote the centre of the $i$th star by $v_i$, and 
let $W$ be the union of all leaves. The graph $G_p$ has $n= p(p+1)/2 +p$ vertices and satisfies
$\deg(v_i) = n-1-i $ for $ i=1, \ldots , p$ and $\deg(w) = n-2 $ for $w \in W$. In particular, the maximum degree of $G_p$ is
$\Delta =  n-2$.
Setting $k= n-1-p$, we note that $G_p$ is $k$-degenerate
as $v_p,v_{p-1},\ldots,v_1$ followed by an arbitrary enumeration 
of $W$ is an elimination order. 
Finally, we observe that $G_p$ satisfies~\eqref{eq:kdeg_edge-bound}
with equality.

\bibliographystyle{amsplain}

\bibliography{bibliography}

\vfill

\small
\vskip2mm plus 1fill
\noindent
Version \today{}
\bigbreak

\bigbreak

\noindent
\begin{tabular}{cc}
\begin{minipage}[t]{0.5\linewidth}
Henning Bruhn\\{\tt <henning.bruhn@uni-ulm.de>}\\
Laura Gellert\\
{\tt <laura@lk-gellert.de>}\\
Universit\"at Ulm, Germany
\end{minipage}
&
\begin{minipage}[t]{0.5\linewidth}
Richard Lang\\ 
{\tt <r.lang.1@bham.ac.uk>}\\
University of Birmingham, UK 
\end{minipage}
\end{tabular}

\end{document}